\newtheorem{proposition}{Proposition}[section]
\newtheorem{lemma}[proposition]{Lemma}
\newtheorem{definition}[proposition]{Definition}
\newtheorem{Theorem}[proposition]{Theorem}
\newtheorem{corollary}[proposition]{Corollary}
\newtheorem{property}[proposition]{Property}
\begin{document}
\begin{CJK*}{GBK}{song}

\centerline{\Large{\textbf{Gap Sequence of Cutting Sequence with Slope $\theta=[0;\dot{d}]$}}}

\vspace{0.1cm}

\centerline{Yuke Huang\footnote[1]{Department of Mathematical Sciences, Tsinghua University, Beijing, 100084, P. R. China.}$^,$\footnote[2]{E-mail address: hyg03ster@163.com.}
~~Hanxiong Zhang\footnote[3]{College of Sciences, China University of Mining and Technology, Beijing, Beijing, 100083, P. R. China.}$^,$\footnote[4]{E-mail address: zhanghanxiong@163.com(Corresponding author).}}

\vspace{1cm}

\centerline{\textbf{\large{ABSTRACT}}}

\vspace{0.4cm}

In this paper, we consider the factor properties and gap sequence of a special type of cutting sequence with slope $\theta=[0;\dot{d}]$, denoted by $F_{d,\infty}$.
Let $\omega$ be a factor of $F_{d,\infty}$, then it occurs in the sequence infinitely many times. Let $\omega_p$ be the $p$-th occurrence of $\omega$ and $G_p(\omega)$ be the gap between $\omega_p$ and $\omega_{p+1}$.
We define the $d$ types of kernel words and envelope words,
give two versions of "uniqueness of kernel decomposition
property". Using them, we prove the gap sequence $\{G_p(\omega)\}_{p\geq1}$
has exactly two distinct elements for each $\omega$, and determine the expressions of gaps completely.
Furthermore, we prove that the gap sequence is $\sigma_i(F_{d,\infty})$, where $\sigma_i$ is a substitution depending only on the type of $Ker(\omega)$, i.e. the kernel word of $\omega$.
We also determine the position of $\omega_p$ for all $(\omega,p)$.
As applications, we study some combinatorial properties, such as the power, overlap and separate property between $\omega_p$ and $\omega_{p+1}$ for all $(\omega,p)$, and find all palindromes in $F_{d,\infty}$.

\vspace{0.2cm}

\noindent\textbf{Keyword:} Cutting Sequence, Continued fraction expansion, Gap sequence, Kernel word, Envelope word.


\vspace{0.6cm}

\setcounter{section}{1}

\noindent\textbf{\large{1.~Introduction}}

\vspace{0.4cm}

Let $\theta>0$ be an irrational real number, and consider the line $L_\theta: y=\theta x$ for $x>0$ through the origin with slope $\theta$. As the line $L_\theta$ travels to the right, write $c_i=a$ if $L_\theta$ intersects a vertical line; $c_i=b$ if $L_\theta$ intersects a horizontal line.
Call the resulting infinite word $C_\theta=c_1c_2c_3\cdots$ a cutting sequence with slope $\theta$. For example, for $\theta=(\sqrt{5}-1)/2$, $C_\theta$ is the Fibonacci sequence $abaababa\cdots$. Cutting sequence, as a kind of aperiodic sequence with minimal language complexity, have been studied for a long time.
These sequences appear in the mathematical literature under many different names, such as Sturmian sequences, rotation sequences, Christoffel words, balanced sequences, and so forth.

The combinatorial properties of cutting sequence are of great
interest in many aspects of mathematics and computer science, symbolic dynamics, theoretical computer science etc., we refer to Allouche and Shallit\cite{AS2003}, Lothaire\cite{L1983,L2002}, Berstel\cite{B1966,B1980}.
Cutting sequence have also been considered by
Wen and Wen\cite{WW1993}, Ito and Yasutomi\cite{IY1990}, Mignosi\cite{M1991},
Cao and Wen\cite{CW2003}, Chuan and Ho\cite{CH2010}, and so forth.

Let slope $\theta$ have a continued fraction expansion $\theta=[0;d_1,d_2,d_3,\cdots]$ with $d_i\in\mathbb{N}$.
In this paper, we consider a special type of cutting sequences with slope $\theta=[0;\dot{d}]$, i.e. $d_i=d$ for all $i\geq1$. In this case, we denote the cutting sequence by $F_{d,\infty}$. Since $(\sqrt{5}-1)/2=[0;1,1,1,\cdots]$, $F_{1,\infty}$ is Fibonacci sequence.

Wen and Wen\cite{WW1994} studied the factor structure of Fibonacci sequence, where they defined the singular word and give the positively separate property of the singular words.
Huang and Wen\cite{HW2014} extend the results from singular words to arbitrary words $\omega$ of the Fibonacci sequence, and discuss the structure of gap sequence $\{G_p(\omega)\}_{p\geq1}$.
The main aim of this article is to extend the results in Huang and Wen\cite{HW2014} from Fibonacci sequence $F_{1,\infty}$ to sequences $F_{d,\infty}$ for $d\geq2$.

The main result in this paper is as follows.

\vspace{0.2cm}

\noindent\textbf{Theorem} (Gap sequence of factor $\omega\prec F_{d,\infty}$)\textbf{.}

\emph{(1) Any factor $\omega$ has exactly two distinct gaps;}

\emph{(2) The gap sequence $\{G_p(\omega)\}$ is
the sequence $\sigma_i(F_{d,\infty})$, where $\sigma_i$ is a substitution depending only on the type of $Ker(\omega)$, i.e. the kernel of $\omega$.}

\vspace{0.2cm}

The main tools in this paper are "kernel word" and "envelope word". Using them, we can give the expressions of each gap $G_p(\omega)$ and each substitution $\sigma_i$,
Then we can determine the structure of gap sequence of $F_{d,\infty}$ completely for all $d\geq2$. We can also give the position of $\omega_p$.

This paper is organized as follows.

Section 1 is devoted to the introduction and preliminaries. In Section 2, we define two new notions "kernel word" and "envelope word". In Section 3 and 4, we discuss the gaps and gap sequence of kernel words and envelope words separately, then give the relation between them in Section 5. In Section 6, we give two versions of "uniqueness of kernel decomposition property": weak and strong, these properties make "kernel word" so important and so special. Using them, we can determine the gaps and gap sequence of arbitrary word.
As applications, we study some combinatorial properties in Section 7, such as the power, overlap and separate property between $\omega_p$ and $\omega_{p+1}$ for all $(\omega,p)$, and find all palindromes with kernel $K_{d,m,i}$.

\vspace{0.4cm}

\noindent\emph{1.1 Notation and Basic Properties}

\vspace{0.4cm}

Let $\mathcal{A}=\{a,b\}$ be a binary alphabet. Let $\mathcal{A}^\ast$ be the set of finite words on $\mathcal{A}$ and $\mathcal{A}^{\mathbb{N}}$ be the set of one-sided infinite words. The elements of $\mathcal{A}^\ast$ are called words or factors, which will be denoted by $\omega$. The neutral element of $\mathcal{A}^\ast$ is called the empty word, which we denote by $\varepsilon$. For a finite word $\omega=x_1x_2\cdots x_n$, the length of $\omega$ is equal to $n$ and denoted by $|\omega|$.
The number of occurrences of letter $\alpha\in\mathcal{A}$ in $\omega$ is denoted by $|\omega|_\alpha$.

Let $F_{d,\infty}$ be the cutting sequence with slope $\theta=[0;\dot{d}]$. Let $\sigma_d:\mathcal{A}\rightarrow \mathcal{A}^\ast$ be a morphism defined by $\sigma_d(a)=a^db$, $\sigma_d(b)=a$. As we know, $\mathcal{A}^\ast$ is the free monoid on $\mathcal{A}$, so $\sigma_d(ab)=\sigma_d(a)\sigma_d(b)$. We define the $m$-th iteration of $\sigma_d$ by $\sigma_d^m(a)=\sigma_d^{m-1}(\sigma_d(a))$, $m\geq1$ and we denote $F_{d,m}=\sigma_d^m(a)$. By convention, we define $\sigma_d^0(a)=a$ and $\sigma_d^0(b)=b$.
Then the fixed point beginning with $a$ of the substitution $\sigma_d$ is sequence $F_{d,\infty}$. For details, see Theorem 3 in \cite{CW2003}.

The notation $\nu\prec\omega$ means that word $\nu$ is a factor of word $\omega$.
We say that word $\nu$ is a prefix (resp. suffix) of word $\omega$, and write $\nu\triangleleft\omega$ (resp. $\nu\triangleright\omega$) if there exists $u\in\mathcal{A}^\ast$ such that $\omega=\nu u$ (resp. $\omega=u\nu$).

For a finite word $\omega=x_1x_2\cdots x_n$, the mirror word $\overleftarrow{\omega}$ of $\omega$ is defined to be $\overleftarrow{\omega}=x_n\cdots x_2x_1$. A word $\omega$ is called a palindrome if $\omega=\overleftarrow{\omega}$.

Let $\tau=x_1x_2\cdots$ be a sequence, for any $i\leq j$, define $\tau[i,j]:=x_ix_{i+1}\cdots x_{j-1}x_j$, the factor of $\tau$ of length $j-i+1$, starting from the $i$-th letter and ending to the $j$-th letter. By convention, we note $\tau[i]:=\tau[i,i]=x_i$ and $\tau[i,i-1]:=\varepsilon$.

If $\nu\prec\omega$, where $\omega=x_1x_2\cdots$ is a finite word or a sequence, $\nu$ is said to occur at position $i$ in $\omega$ if $\omega[i,i+|\nu|-1]=x_ix_{i+1}\cdots x_{i+|\nu|-1}=\nu$.

Let $\nu=\nu_1\nu_2\cdots\nu_n\in\mathcal{A}^\ast$, we denote by $\nu^{-1}:=\nu_n^{-1}\cdots\nu_2^{-1}\nu_1^{-1}$, called the inverse word of $\nu$.
Let $\omega=u\nu$, then $\omega^{-1}=(u\nu)^{-1}=\nu^{-1}u^{-1}$, $\omega\nu^{-1}=u\nu\nu^{-1}=u$ and $u^{-1}\omega=u^{-1}u\nu=\nu$.

\vspace{0.4cm}

\noindent\emph{1.2~Some Definitions}

\vspace{0.4cm}

Let $\omega$ be factor of cutting sequence $F_{d,\infty}$ for $d\geq2$. In this subsection, we will introduce some definitions: factor sequence $\{\omega_p\}_{p\ge 1}$, gap word $G_p(\omega)$, gap sequence $\{G_p(\omega)\}_{p\ge 1}$, etc.
We will give the definitions about kernel word and envelope word in Section 2.

\begin{definition}[Factor sequence]
Let $\omega$ be a factor of cutting sequence $F_{d,\infty}$ for $d\geq2$, then it occurs in the sequence infinitely many times, which we arrange by the sequence
$\{\omega_p\}_{p\ge 1}$, where $\omega_p$ denote the $p$-th occurrence of $\omega$.
\end{definition}

\begin{definition}[Gap]
Let $\omega_p=x_{i+1}\cdots x_{i+n}$, $\omega_{p+1}=x_{j+1}\cdots x_{j+n}$, the gap
between $\omega_p$ and $\omega_{p+1}$, denoted by $G_p(\omega)$, is defined by
\begin{equation*}
G_p(\omega)=
\begin{cases}
\varepsilon&when~i+n=j,~\omega_p~and~\omega_{p+1}~are~adjacent;\\
x_{i+n+1}\cdots x_{j}&when~i+n<j,~\omega_p~and~\omega_{p+1}~are~separated;\\
(x_{j+1}\cdots x_{i+n})^{-1}&when~i+n>j,~\omega_p~and~\omega_{p+1}~are~overlapped.
\end{cases}
\end{equation*}

The set of gaps of factor $\omega$ is defined by $\{G_p(\omega)|~p\geq1\}$.
\end{definition}

\noindent\textbf{Example.} In sequence $F_{2,\infty}$,
consider factor $a$, $G_1(a)=\varepsilon$ (adjacent) and $G_2(a)=b$ (separated);
consider factor $aa$, $G_1(aa)=b$ (separated) and $G_3(aa)=a^{-1}$ (overlapped).

\vspace{0.2cm}

\noindent\textbf{Remark.}
1. By convention, we define $G_0(\omega)$ as the prefix of $F_{d,\infty}$ before $\omega_1$.

2. When $\omega_p$ and $\omega_{p+1}$ are overlapped,
the overlapped part is the word $x_{j+1}\cdots x_{i+n}$. We take its inverse word as the gap $G_p(\omega)$.
By this way, it is clear to distinguish the cases "adjacent", "separated" and "overlapped".

\vspace{0.2cm}
%
%

\noindent\textbf{Remark.}
A related concept of "gap" is "return word", which is introduced by F.Durand\cite{D1998}.
He proved that a sequence is primitive substitutive if and only if the set of its return words is finite, which means, each factor of this sequence has finite return words.
In 2001, L.Vuillon\cite{V2001} proved that an infinite word $\tau$ is a Sturmian sequence if and only if each non-empty factor $\omega\prec\tau$ has exactly two distinct return words.
Some other related researches(see also \cite{AB2005,BPS2008}) were interested in the cardinality of the set of return words of $\omega$ and the consequent results, but didn't concern about the structures of the sequence derived by return words.

Essentially, gap words can be derived from the return words which differ from only one prefix $\omega$, but since the terminology "gap" will be convenient and have some advantages for our discussions, we prefer to adopt it.

\begin{definition}[Gap sequence]
Let $G_p(\omega)$ be the gap between $\omega_p$ and $\omega_{p+1}$, we call $\{G_p(\omega)\}_{p\ge 1}$ the gap sequence of factor $\omega$.
\end{definition}



\vspace{0.5cm}

\stepcounter{section}

\noindent\textbf{\large{2.~Kernel Words $K_{d,m,i}$ and Envelope Word $E_{d,m,i}$}}

\vspace{0.4cm}

In Huang and Wen\cite{HW2014}, we defined a new concept "kernel word", which plays an important role in the research. We are going to determine the kernel words and envelope word for sequence $F_{d,\infty}$, $d\geq2$. Using them, we can study the structure of sequence $F_{d,\infty}$.

Notice that in the case of Fibonacci sequence, the kernel words are exactly
singular words, but this is not the case for sequence $F_{d,\infty}$. In fact, there are $d$ types of kernel words for sequence $F_{d,\infty}$, and they are much more complicated.

\begin{definition}[$\delta_m$] Let $\delta_m$ be the last letter of $F_{d,m}=\sigma_d^m(a)$, where $\sigma_d(a,b)=(a^db,a)$. \end{definition}

\begin{property} For all $d\geq2$, $\delta_m=a$ (resp. $b$) when $m$ is even (resp. odd). \end{property}

\begin{definition}[Kernel word $K_{d,m,i}$] The cutting sequence $F_{d,\infty}$ has $d$ types of kernel words. The kernel word with order $m$ of $i$-th type is defined as
$$K_{d,m,i}=\delta_m\ast F_{d,m}^i\ast F_{d,m-1}\ast \delta_{m-1}^{-1},$$
where $\delta_m$ is the last letter of $F_{d,m}$, $d\geq1$, $m\geq0$ and $0\leq i\leq d-1$.
\end{definition}

\noindent\textbf{Remark.} When $d=1$, the cutting sequence $F_{d,\infty}$ is Fibonacci sequence. There is only one type of kernel words: singular words.
When $d\geq2$, there are $d$ types of kernel words, two of them are singular word and adjoining word (see Cao and Wen\cite{CW2003}).

\vspace{0.2cm}

\noindent\textbf{Example.} $K_{3,1,0}=b$ (singular word), $K_{3,1,1}=baaab$,  $K_{3,1,2}=baaabaaab$ (adjoining word).

\begin{definition}[Kernel set] For fixed $d\geq2$, kernel set $\mathcal{K}_d:=\{K_{d,m,i}:m\geq0,0\leq i\leq d-1\}$. \end{definition}

\begin{definition}[Order] For fixed $d\geq2$, we give an order "$\sqsubset$" on kernel set $\mathcal{K}_d$ as follows:

(1) If $m<n$, then $K_{d,m,i}\sqsubset K_{d,n,j}$;

(2) If $m=n$ and $i<j$, then $K_{d,m,i}\sqsubset K_{d,n,j}$.
\end{definition}

\begin{definition}[Kernel word of factor $\omega$, $Ker(\omega)$]\
Let $\omega$ be a factor of sequence $F_{d,\infty}$, the kernel word of factor $\omega$ denote by $Ker(\omega)=K_{d,m,i}\in\mathcal{K}$, where
$$K_{d,m,i}=\max_\sqsubset\{K_{d,n,j}:K_{d,n,j}\prec\omega\}$$
\end{definition}

\begin{lemma}\

(1) $K_{d,m,0}=K_{d,m-2,d-1}\ast K_{d,m-2,d-2}^{-1}\ast K_{d,m-2,d-1}$ when $m\geq2$;

(2) $K_{d,m,i}=[K_{d,m,0}\ast K_{d,m-1,d-1}]^i\ast K_{d,m,0}$ when $m\geq1$ and $1\leq i\leq d-1$.
\end{lemma}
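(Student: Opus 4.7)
The plan is to prove both identities by direct symbol manipulation in the free monoid (extended by the formal inverse notation introduced in Section~1.1), using three ingredients: the defining formula $K_{d,m,i}=\delta_m\ast F_{d,m}^i\ast F_{d,m-1}\ast\delta_{m-1}^{-1}$; the standard Fibonacci-type recurrence $F_{d,m}=F_{d,m-1}^d\ast F_{d,m-2}$ for $m\ge 2$, which is immediate from $\sigma_d(a)=a^d b$, $\sigma_d(b)=a$ by a one-line induction on $m$ (and is also the content of Theorem~3 in \cite{CW2003} cited earlier); and Property~2.1, giving $\delta_m=\delta_{m-2}$.

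For part~(2), I would first compute
\[
K_{d,m,0}\ast K_{d,m-1,d-1}=\delta_m F_{d,m-1}\delta_{m-1}^{-1}\ast\delta_{m-1}F_{d,m-1}^{d-1}F_{d,m-2}\delta_{m-2}^{-1}.
\]
Cancelling the central $\delta_{m-1}^{-1}\delta_{m-1}$ and invoking $\delta_{m-2}=\delta_m$ together with the recurrence, this collapses to $\delta_m F_{d,m}\delta_m^{-1}$. Hence the $i$-fold product telescopes: every internal $\delta_m^{-1}\delta_m$ cancels, yielding $(K_{d,m,0}\ast K_{d,m-1,d-1})^i=\delta_m F_{d,m}^i\delta_m^{-1}$. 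One more right-multiplication by $K_{d,m,0}$ absorbs a final $\delta_m^{-1}\delta_m$ and leaves $\delta_m F_{d,m}^i F_{d,m-1}\delta_{m-1}^{-1}=K_{d,m,i}$.

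For part~(1), I would expand
\[
K_{d,m-2,d-1}=\delta_{m-2}F_{d,m-2}^{d-1}F_{d,m-3}\delta_{m-3}^{-1},\qquad K_{d,m-2,d-2}^{-1}=\delta_{m-3}F_{d,m-3}^{-1}F_{d,m-2}^{-(d-2)}\delta_{m-2}^{-1},
\]
and concatenate the three pieces in the indicated order. Successive cancellation of $\delta_{m-3}^{-1}\delta_{m-3}$, then $F_{d,m-3}F_{d,m-3}^{-1}$, then $\delta_{m-2}^{-1}\delta_{m-2}$ leaves a central power of $F_{d,m-2}$ with net exponent $(d-1)-(d-2)+(d-1)=d$. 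The resulting expression $\delta_{m-2}F_{d,m-2}^d F_{d,m-3}\delta_{m-3}^{-1}$ rewrites via the recurrence as $\delta_{m-2}F_{d,m-1}\delta_{m-3}^{-1}$, and the parity identities $\delta_{m-2}=\delta_m$, $\delta_{m-3}=\delta_{m-1}$ turn this into $\delta_m F_{d,m-1}\delta_{m-1}^{-1}=K_{d,m,0}$.

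The only genuine care required is to verify that each $\nu\ast\nu^{-1}$ or $\nu^{-1}\ast\nu$ cancellation above is legitimate as a word identity (not merely as a free-group identity), i.e.\ that the chunk being removed actually sits as a suffix/prefix of the adjacent word. These checks all reduce to Definition~2.1 (the last letter of $F_{d,k}$ is $\delta_k$) together with the observation that every formal inverse in the definition of $K_{d,m,i}$ was introduced precisely to excise such a letter. I do not expect any real obstacle beyond this routine bookkeeping; the lemma is essentially the computation the definition of $K_{d,m,i}$ was designed to satisfy.
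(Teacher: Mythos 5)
Your proposal is correct and follows essentially the same route as the paper: expand each kernel word via its definition, cancel the adjacent $\delta_{m-1}^{-1}\delta_{m-1}$ (and analogous) pairs, and invoke the recurrence $F_{d,m}=F_{d,m-1}^dF_{d,m-2}$ together with $\delta_m=\delta_{m-2}$ to telescope the product. The paper only writes out part (2) and declares (1) similar, whereas you carry out (1) explicitly and note the need to check that each formal cancellation is a genuine word identity, but this is the same computation.
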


\begin{proof} The proofs of (1) and (2) are similar, we prefer to take (2) for example.

When $m\geq1$ and $1\leq i\leq d-1$, by the definition of kernel word and $\delta_m=\delta_{m-2}$, we have
\begin{equation*}
\begin{split}
&[K_{d,m,0}\ast K_{d,m-1,d-1}]^i\ast K_{d,m,0}\\
=&[\delta_mF_{d,m-1}\delta_{m-1}^{-1}\ast \delta_{m-1}F_{d,m-1}^{d-1}F_{d,m-2}\delta_{m-2}^{-1}]^i\ast \delta_mF_{d,m-1}\delta_{m-1}\\
=&[\delta_mF_{d,m-1}^{d}F_{d,m-2}\delta_{m-2}^{-1}]^i\ast \delta_mF_{d,m-1}\delta_{m-1}
=\delta_mF_{d,m}^iF_{d,m-1}\delta_{m-1}=K_{d,m,i}
\end{split}
\end{equation*}

The proof of (1) could be obtained by a similar argument.
\end{proof}

Using the property above and by induction, we have the next corollary.

\begin{property}[Palindrome] Each kernel word $K_{d,m,i}$ is palindrome.\end{property}

\begin{definition}[Envelope words $E_{d,m,i}$] The cutting sequence $F_{d,\infty}$ has $d$ types of envelope words. The envelope word with order $m$ of $i$-th type is defined as
$$E_{d,m,i}=F_{d,m}^{i+1}\ast F_{d,m-1}\ast F_{d,m}\ast\delta_{m}^{-1}\delta_{m-1}^{-1},$$
where $\delta_m$ is the last letter of $F_{d,m}$, $d\geq1$, $m\geq0$ and $0\leq i\leq d-1$.
\end{definition}

\begin{proposition} The relation between $K_{d,m,i}$ and $E_{d,m,i}$ is
$$E_{d,m,i}=\mu_1\ast K_{d,m,i}\ast\mu_2,$$
where $\mu_1=\delta_{m+1}^{-1}K_{d,m+1,0}$ and $\mu_2=K_{d,m+1,0}\delta_{m+1}^{-1}$ are constant words depending only on $d$, $m$.
\end{proposition}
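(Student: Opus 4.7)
The plan is to verify the identity by direct substitution into the definitions, then chase the inverse cancellations. Specializing Definition 2.3 at order $m+1$ and type $0$ gives
$K_{d,m+1,0} = \delta_{m+1}\,F_{d,m}\,\delta_m^{-1}$.
Therefore $\mu_1 = \delta_{m+1}^{-1} K_{d,m+1,0} = F_{d,m}\,\delta_m^{-1}$ after the immediate cancellation $\delta_{m+1}^{-1}\delta_{m+1}$, and $\mu_2 = K_{d,m+1,0}\,\delta_{m+1}^{-1} = \delta_{m+1}\,F_{d,m}\,\delta_m^{-1}\,\delta_{m+1}^{-1}$. In particular both factors depend only on $d$ and $m$, confirming the last clause of the proposition.

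Next I would concatenate $\mu_1 \cdot K_{d,m,i} \cdot \mu_2$ and simplify in two stages. At the left junction, $\mu_1$ meets $K_{d,m,i} = \delta_m\,F_{d,m}^i\,F_{d,m-1}\,\delta_{m-1}^{-1}$ and the pair $\delta_m^{-1}\delta_m$ collapses, leaving $F_{d,m}^{i+1}\,F_{d,m-1}\,\delta_{m-1}^{-1}$. At the right junction I would invoke Property 2.2: since $m-1$ and $m+1$ have the same parity, $\delta_{m-1}=\delta_{m+1}$, so the pair $\delta_{m-1}^{-1}\delta_{m+1}$ also collapses and the surviving tail $\delta_m^{-1}\delta_{m+1}^{-1}$ rewrites as $\delta_m^{-1}\delta_{m-1}^{-1}$. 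The net expression is $F_{d,m}^{i+1}\,F_{d,m-1}\,F_{d,m}\,\delta_m^{-1}\,\delta_{m-1}^{-1}$, which is exactly $E_{d,m,i}$ by Definition 2.6.

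There is essentially no mathematical obstacle here: the claim is a bookkeeping identity among elements of the free group over $\mathcal{A}$. The only care needed is that each inverse letter we cancel is a genuine suffix of the preceding word, so that the simplifications remain inside $\mathcal{A}^{\ast}$ rather than being purely formal. This is guaranteed by Definition 2.1, which forces $F_{d,m}$ to end in $\delta_m$ and $F_{d,m-1}$ to end in $\delta_{m-1}$; together with the parity identity $\delta_{m+1}=\delta_{m-1}$, these ensure that every step in the chain is a legitimate operation on finite words.
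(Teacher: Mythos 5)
Your computation is correct, and it is essentially the verification the paper intends: Proposition 2.10 is stated there without proof, and the argument is exactly the direct expansion of $K_{d,m+1,0}=\delta_{m+1}F_{d,m}\delta_m^{-1}$ and $K_{d,m,i}$, followed by cancellation at the two junctions using the parity identity $\delta_{m+1}=\delta_{m-1}$ from Property 2.2. You also rightly flag the only non-formal point --- that each cancelled inverse letter is a genuine boundary letter of the adjacent word (e.g.\ $F_{d,m}\delta_m^{-1}$ ends in $\delta_{m-1}=\delta_{m+1}$), so every step stays inside $\mathcal{A}^{\ast}$ --- which is all that needs checking.
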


Since each kernel word $K_{d,m,i}$ is palindrome, using the relation between $K_{d,m,i}$ and $E_{d,m,i}$ above, we have the next corollary.

\begin{property}[Palindrome] Each envelope word $E_{d,m,i}$ is palindrome.\end{property}

In Section 6, we give two versions of "uniqueness of kernel decomposition property": weak and strong, these properties make "kernel word" so important and so special.
In fact, we can prove the kernel set $\mathcal{K}_d$ we defined is minimum.

\begin{property}[$\mathcal{K}_d$ is minimum]\ Each subset of kernel set $\mathcal{K}_d$ can not satisfy the "Uniqueness of kernel decomposition, weak" in Theorem 6.2.
\end{property}

\begin{proof} Let $\mathcal{K}'$ is a proper subset of $\mathcal{K}_d$, $K_{d,m,i}$ is the minimal element of $\mathcal{K}_d-\mathcal{K}'$ under order $"\sqsubset"$.
If $K_{d,m,i}=a$ (resp. $b$), then factor $\omega=a$ (resp. $b$) has no kernel word in $\mathcal{K}'$.

(1) If $i=0$, by Lemma 2.7 and induction, the kernel word in $\mathcal{K}'$ of $\omega=K_{d,m,0}$ is $K_{d,m-2,d-1}$.

(2) If  $1\leq i\leq d-1$, by Lemma 2.7, the kernel word in $\mathcal{K}'$ of $\omega=K_{d,m,i}$ is $K_{d,m,i-1}$.

Both (1) and (2), $Ker(\omega)$ occurs in $\omega$ twice.
So $\mathcal{K}'$ can not satisfy the "Uniqueness of kernel decomposition, weak" in Theorem 6.2.
\end{proof}


\vspace{0.5cm}

\stepcounter{section}

\noindent\textbf{\large{3.~Gaps and Gap Sequence of Kernel Word $K_{d,m,i}$}}

\vspace{0.4cm}
%
%

In this section, we will determine the structure of the gap sequence $\{G_p(K_{d,m,i})\}_{p\geq1}$ for each kernel word $K_{d,m,i}$.
First, we give two lemmas about the basic properties of $F_{d,m}$.

\begin{lemma}\

(1) $F_{d,m}$ occurs in $F_{d,m}F_{d,m}$ twice at positions 1 and $f_{d,m}+1$ when $m\geq1$;

(2) $F_{d,m}$ occurs in $F_{d,m}F_{d,m-1}F_{d,m}$ three times at positions 1, $f_{d,m}+1$ and $f_{d,m}+f_{d,m-1}+1$ when $m\geq2$.
\end{lemma}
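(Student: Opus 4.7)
The plan is to prove both (1) and (2) by induction on $m$, leveraging the identity $F_{d,m}=\sigma_d(F_{d,m-1})$, which yields $F_{d,m}F_{d,m}=\sigma_d(F_{d,m-1}F_{d,m-1})$ and $F_{d,m}F_{d,m-1}F_{d,m}=\sigma_d(F_{d,m-1}F_{d,m-2}F_{d,m-1})$. The base cases (1) at $m=1$ and (2) at $m=2$ are handled by direct inspection: since $F_{d,1}=a^d b$, one has $F_{d,1}F_{d,1}=a^d b a^d b$, and any occurrence of $a^d b$ must end at a letter $b$, so the two $b$'s force the occurrences to be at positions $1$ and $d+2=f_{d,1}+1$. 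A similar computation, tracking the $b$'s in $F_{d,2}F_{d,1}F_{d,2}$, pins down the three claimed occurrences for part (2) at $m=2$.

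For the inductive step, the key tool is a desubstitution lemma: every occurrence of $\sigma_d(F_{d,m-1})$ inside $\sigma_d(v)$---where $v$ is either $F_{d,m-1}F_{d,m-1}$ or $F_{d,m-1}F_{d,m-2}F_{d,m-1}$---must begin at a $\sigma_d$-block boundary, i.e.\ at a position of the form $|\sigma_d(u)|+1$ for some prefix $u$ of $v$. Granting this, occurrences of $F_{d,m}$ in $F_{d,m}F_{d,m}$ (resp.\ in $F_{d,m}F_{d,m-1}F_{d,m}$) correspond bijectively to occurrences of $F_{d,m-1}$ in $F_{d,m-1}F_{d,m-1}$ (resp.\ in $F_{d,m-1}F_{d,m-2}F_{d,m-1}$); the inductive hypothesis supplies exactly two (resp.\ three) such occurrences, and the length identities $|\sigma_d(F_{d,m-1})|=f_{d,m}$, $|\sigma_d(F_{d,m-2})|=f_{d,m-1}$ translate the positions to exactly those claimed.

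The main obstacle is proving the desubstitution lemma, i.e.\ ruling out occurrences of $F_{d,m}$ starting in the interior of a $\sigma_d$-block. The cleanest route is to use the letter $b$ as a marker: since $\sigma_d(a)=a^d b$ contributes exactly one $b$ at the end of its image while $\sigma_d(b)=a$ contributes none, the $b$-positions in $\sigma_d(v)$ are precisely the positions $|\sigma_d(u)|+d+1$ as $u$ ranges over prefixes of $v$ ending just before an $a$-letter. Since $F_{d,m-1}$ begins with $a$, any occurrence of $\sigma_d(F_{d,m-1})$ at some position $k$ starts with $a^d b$, so there must be a $b$ at position $k+d$ of $\sigma_d(v)$; this $b$ is the terminal letter of a $\sigma_d(a)$-block, and the block therefore begins at position $k$, forcing $k$ to be a $\sigma_d$-block boundary. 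This completes the induction.
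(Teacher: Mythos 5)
Your desubstitution lemma and its proof via the $b$-marker are correct, and they give a genuinely different (and arguably cleaner) route than the paper, which instead locates the first $F_{d,m}$-block of a candidate occurrence of $F_{d,m+1}$ among the $d+2$ (resp. $2d+3$) positions permitted by the inductive hypothesis and then excludes the spurious ones by comparing last letters ($\delta_{m+1}\neq\delta_m$) or by a second appeal to the hypothesis. However, there is a genuine gap in the step where you pass from ``every occurrence of $F_{d,m}$ in $\sigma_d(v)$ starts at a block boundary'' to ``occurrences of $F_{d,m}$ in $\sigma_d(v)$ correspond bijectively to occurrences of $F_{d,m-1}$ in $v$''. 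The forward direction is fine, but the backward direction is not: an occurrence of $F_{d,m}=\sigma_d(F_{d,m-1})$ beginning at the boundary corresponding to position $j$ of $v$ need not \emph{end} at a block boundary. When $F_{d,m-1}$ ends in $b$ (i.e.\ $m$ is even), $F_{d,m}$ ends in $\sigma_d(b)=a$, and that final $a$ can equally well be the first letter of a $\sigma_d(a)=a^db$ block. Desubstitution then only tells you that $F_{d,m-1}\delta_{m-1}^{-1}$ occurs at position $j$ of $v$, followed by some letter which may be $a$ rather than the required $b$; since $\{a^db,\,a\}$ is not a prefix code, you cannot upgrade this to an occurrence of $F_{d,m-1}$ itself.

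Consequently the induction as stated does not close: you end up bounding the number of occurrences of $F_{d,m}$ upstairs by the number of occurrences of the \emph{shorter} word $F_{d,m-1}\delta_{m-1}^{-1}$ downstairs, which your inductive hypothesis does not control. The fix is to strengthen the hypothesis so that (1) and (2) are proved simultaneously with their companions for $F_{d,m}\delta_m^{-1}$ --- exactly the content of the paper's Lemma 3.2: knowing that $F_{d,m-1}\delta_{m-1}^{-1}$ occurs only at the two (resp.\ three) claimed positions of $v$ rules out the spurious boundary occurrences and restores your bijection. With that amendment your argument goes through.
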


\begin{proof} By induction, when $m=1,2$ (resp. $m=2$), property (1) (resp. (2)) holds. Assume the two properties hold for $n=m$. Consider $n=m+1$, i.e. the position of $F_{d,m+1}$.

\vspace{0.2cm}

\textbf{Proof of (1).} Consider the first $F_{d,m}$ in $F_{d,m+1}=\underline{F_{d,m}}\cdots F_{d,m}F_{d,m-1}$, the $d+2$ possible positions are shown as $[i]$ in the next figure, which is plotted as $d=3$.
\setlength{\unitlength}{0.9mm}
\begin{center}
\begin{picture}(135,28)
\linethickness{3pt}
\put(0,6){\line(1,0){132}}
\linethickness{2pt}
\put(20,20){\line(1,0){66}}
\linethickness{1pt}
\put(0,0){\line(0,1){18}}
\put(66,0){\line(0,1){18}}
\put(132,0){\line(0,1){18}}
\put(20,0){\line(0,1){10}}
\put(40,0){\line(0,1){10}}
\put(60,0){\line(0,1){10}}
\put(86,0){\line(0,1){10}}
\put(106,0){\line(0,1){10}}
\put(126,0){\line(0,1){10}}
\put(6,8){$F_{d,m}$}
\put(26,8){$F_{d,m}$}
\put(46,8){$F_{d,m}$}
\put(72,8){$F_{d,m}$}
\put(92,8){$F_{d,m}$}
\put(112,8){$F_{d,m}$}
\put(24,14){\vector(-1,0){24}}
\put(42,14){\vector(1,0){24}}
\put(26,13){$F_{d,m+1}$}
\put(90,14){\vector(-1,0){24}}
\put(108,14){\vector(1,0){24}}
\put(92,13){$F_{d,m+1}$}
\put(0,1){$[1]$}
\put(20,1){$[2]$}
\put(40,1){$[3]$}
\put(60,1){$[4]$}
\put(66,1){$[5]$}
\put(47,22){$F_{d,m+1}$}
\put(12,22){If:}
\put(20,20){\line(0,1){6}}
\put(86,20){\line(0,1){6}}
\put(44,23){\vector(-1,0){24}}
\put(62,23){\vector(1,0){24}}
\end{picture}
\end{center}
\centerline{Fig. 3.1: The $d+2$ possible positions in (1).}

\vspace{0.2cm}

Obviously, $F_{d,m+1}$ occurs at positions [1] and [5]. We are going to exclude all other possible positions. Take position [2] for example.
Suppose $F_{d,m+1}$ occurs at position [2], the last letter is $\delta_{m+1}$. But at the same position in $F_{d,m+1}F_{d,m+1}$, the letter is $\delta_{m}$. It contradicts $\delta_{m+1}\neq\delta_{m}$.

\vspace{0.2cm}

\textbf{Proof of (2).} Consider the first $F_{d,m}$ in $F_{d,m+1}=\underline{F_{d,m}}\cdots F_{d,m}F_{d,m-1}$, the $2d+3$ possible positions are shown as $[i]$ in the next figure, which is plotted as $d=3$.
\setlength{\unitlength}{0.9mm}
\begin{center}
\begin{picture}(155,28)
\linethickness{3pt}
\put(0,6){\line(1,0){152}}
\linethickness{2pt}
\put(20,20){\line(1,0){66}}
\linethickness{1pt}
\put(0,0){\line(0,1){18}}
\put(66,0){\line(0,1){18}}
\put(86,0){\line(0,1){18}}
\put(152,0){\line(0,1){18}}
\put(20,0){\line(0,1){10}}
\put(40,0){\line(0,1){10}}
\put(60,0){\line(0,1){10}}
\put(72,0){\line(0,1){10}}
\put(78,0){\line(0,1){10}}
\put(84,0){\line(0,1){8}}
\put(106,0){\line(0,1){10}}
\put(126,0){\line(0,1){10}}
\put(146,0){\line(0,1){10}}
\put(6,8){$F_{d,m}$}
\put(26,8){$F_{d,m}$}
\put(46,8){$F_{d,m}$}
\put(92,8){$F_{d,m}$}
\put(112,8){$F_{d,m}$}
\put(132,8){$F_{d,m}$}
\put(24,14){\vector(-1,0){24}}
\put(42,14){\vector(1,0){24}}
\put(26,13){$F_{d,m+1}$}
\put(110,14){\vector(-1,0){24}}
\put(128,14){\vector(1,0){24}}
\put(112,13){$F_{d,m+1}$}
\put(72,13){$F_{d,m}$}
\put(0,1){$[1]$}
\put(20,1){$[2]$}
\put(40,1){$[3]$}
\put(60,1){$[4]$}
\put(66,1){$[5]$}
\put(72,1){$[6]$}
\put(78,1){$[7]$}
\put(81,9){$[8]$}
\put(86,1){$[9]$}
\put(14,22){If:}
\put(46,22){$F_{d,m+1}$}
\put(20,20){\line(0,1){6}}
\put(86,20){\line(0,1){6}}
\put(44,23){\vector(-1,0){24}}
\put(62,23){\vector(1,0){24}}
\end{picture}
\end{center}
\centerline{Fig. 3.2: The $2d+3$ possible positions in (2).}

\vspace{0.2cm}

Obviously, $F_{d,m+1}$ occurs at positions [1] and [9]. Since
$$F_{d,m}F_{d,m-1}F_{d,m}=F_{d,m}F_{d,m-1}^{d+1}F_{d,m-2}=F_{d,m} \underline{F_{d,m-1}^dF_{d,m-2}}F_{d,m-2}^{d-1}F_{d,m-3}F_{d,m-2},$$
then $F_{d,m+1}$ occurs at position [5]. We are going to exclude all other possible positions.

(1) Suppose $F_{d,m+1}$ occurs at positions [2] or [3], the last letter is $\delta_{m+1}$. But at the same position in $F_{d,m+1}F_{d,m}F_{d,m+1}$, the letter is $\delta_{m}$. It contradicts $\delta_{m+1}\neq\delta_{m}$.

(2) Suppose $F_{d,m+1}$ occurs at position [4], the second $F_{d,m}$ in
$F_{d,m+1}=F_{d,m}\underline{F_{d,m}}F_{d,m}F_{d,m-1}$ occurs in $F_{d,m}F_{d,m}$ as below with position unequal to $1$ or $f_{d,m}$.
$$F_{d,m+1}F_{d,m}F_{d,m+1}=F_{d,m}F_{d,m}F_{d,m}F_{d,m-1}\ast \underline{F_{d,m}\ast F_{d,m}}F_{d,m}F_{d,m}F_{d,m-1}.$$
It contradicts our assumption. Similarly, we know $F_{d,m+1}$ cannot occur at positions [6]-[8].
\end{proof}

\noindent\textbf{Remark.} $F_{d,1}=a^db$ occurs in $F_{d,1}F_{d,1-1}F_{d,1}=a^dba^{d+1}b$ only twice at position 1 and $f_{d,1}+1$.

\begin{lemma} For $m\geq1$,

(1) $F_{d,m}\delta_m^{-1}$ occurs in $F_{d,m}F_{d,m}$ twice at positions 1 and $f_{d,m}+1$;

(2) $F_{d,m}\delta_m^{-1}$ occurs in $F_{d,m}F_{d,m-1}F_{d,m}$ three times at positions 1, $f_{d,m}+1$ and $f_{d,m}+f_{d,m-1}+1$.
\end{lemma}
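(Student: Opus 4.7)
The plan is to establish Lemma~3.2 by combining Lemma~3.1 (which handles the full-length factor $F_{d,m}$) with a direct block-level analysis of the truncated factor $F_{d,m}\delta_m^{-1}$. Since $F_{d,m}\delta_m^{-1}$ is the length-$(f_{d,m}-1)$ prefix of $F_{d,m}$, Lemma~3.1 already supplies occurrences at positions $1$ and $f_{d,m}+1$ for part (1), and at positions $1$ and $f_{d,m}+f_{d,m-1}+1$ for part (2). So two tasks remain: exhibit the extra ``middle'' occurrence in part (2), and rule out all other positions.

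For the extra claimed position $f_{d,m}+1$ in part (2), which is \emph{not} the start of an $F_{d,m}$-occurrence, I would verify it by direct expansion: the length-$(f_{d,m}-1)$ factor read from position $f_{d,m}+1$ of $F_{d,m}F_{d,m-1}F_{d,m}$ is exactly $F_{d,m-1}$ followed by the length-$(f_{d,m}-1-f_{d,m-1})$ prefix of the second $F_{d,m}$. Using $\delta_m=\delta_{m-2}$ together with the identity $F_{d,m}\delta_m^{-1}=F_{d,m-1}^{d}F_{d,m-2}\delta_{m-2}^{-1}$ derived from $F_{d,m}=F_{d,m-1}^{d}F_{d,m-2}$, one checks that this string coincides with $F_{d,m}\delta_m^{-1}$.

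For the uniqueness part, I would induct on $m$, with the base case $m=1$ being immediate since $F_{d,1}\delta_1^{-1}=a^{d}$ and one simply counts runs of $a$'s of length $\geq d$ in the short hosts $a^{d}ba^{d}b$ and $a^{d}ba^{d+1}b$. For the inductive step, suppose $F_{d,m}\delta_m^{-1}$ occurs at some position $p$ in the host $W$ outside the claimed list. If $p$ is the rightmost admissible position, then $F_{d,m}\delta_m^{-1}$ is the length-$(f_{d,m}-1)$ suffix of $W$; comparing this suffix with the prefix $F_{d,m}\delta_m^{-1}$ of $W$ gives $F_{d,m}[i]=F_{d,m}[i+1]$ for all $i<f_{d,m}$, contradicting the presence of both letters in $F_{d,m}$ for $m\geq 1$. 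For an interior candidate $p$, I would mimic the diagrammatic enumeration of Lemma~3.1: the leading $F_{d,m-1}^{d}$ block of $F_{d,m}\delta_m^{-1}$ must align against the internal $F_{d,m-1}^{d}F_{d,m-2}$ structure of $W$, giving $O(d)$ candidate alignments; each spurious one is then excluded by a parity mismatch using $\delta_m\neq\delta_{m-1}$, or by an appeal to Lemma~3.1 (giving case~2a: a forbidden $F_{d,m}$-occurrence) or to the inductive hypothesis at a lower level.

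The main obstacle is the interior-position exclusion. In part (1) this is relatively routine since all claimed occurrences correspond to $F_{d,m}$-occurrences, so the dichotomy ``either $F_{d,m}$ occurs (contradiction with Lemma~3.1) or the one-letter extension $F_{d,m}\delta_m^{-1}\bar\delta_m$ appears (must be excluded)'' suffices. Part (2) is harder: the genuine middle occurrence at $f_{d,m}+1$ is itself of the second kind (extended by $\bar\delta_m$), so one cannot naively eliminate case~2b globally, and the exclusion must be carried out candidate-by-candidate around the $F_{d,m-1}$-block of the host, keeping careful track of which alignment could mimic the legitimate middle occurrence.
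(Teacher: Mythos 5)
Your overall plan --- reduce to Lemma~3.1 via the observation that $F_{d,m}\delta_m^{-1}$ is a prefix of $F_{d,m}$, treat the truncated rightmost candidate position separately, and dispose of the remaining candidates by a case analysis --- is exactly the ``similar argument'' the paper gestures at, and both your base case $m=1$ and your suffix-versus-prefix periodicity argument for the rightmost position are fine. But two things need repair. First, you have misread Lemma~3.1(2): for $m\geq2$ it lists \emph{three} positions, asserting in particular that $F_{d,m}$ does occur at position $f_{d,m}+1$ of $F_{d,m}F_{d,m-1}F_{d,m}$; only in the exceptional case $m=1$ (the Remark after Lemma~3.1) does that middle occurrence fail to extend to a full $F_{d,m}$. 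So the middle occurrence in part (2) is not ``of the second kind'' for $m\geq2$, and the premise of your claim that ``one cannot naively eliminate case 2b globally'' is false: for $m\geq2$ all three legitimate occurrences extend to occurrences of $F_{d,m}$, and eliminating case 2b globally is precisely the right move.

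Second, and more seriously, the one step that distinguishes Lemma~3.2 from Lemma~3.1 --- excluding an occurrence of $F_{d,m}\delta_m^{-1}$ followed by the wrong letter $\delta_{m-1}$ --- is announced (``must be excluded'') but never carried out, and the tool you offer for it, the last-letter parity mismatch from Lemma~3.1, is exactly the argument the truncation disables: the letter whose value that argument inspects is the one you have deleted. The gap is short to fill, though. For $m\geq1$ the word $F_{d,m}$ ends in $\delta_{m-1}\delta_m$, so $F_{d,m}\delta_m^{-1}\delta_{m-1}$ ends in $\delta_{m-1}\delta_{m-1}$; for even $m$ this is $bb$, which is not a factor of $F_{d,\infty}$, and for odd $m\geq3$ the word in fact ends in $a^{d+2}$ (since $F_{d,m}$ then ends in $a^{d+1}b$), which again is not a factor because maximal $a$-runs in $F_{d,\infty}$ have length at most $d+1$. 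Hence for $m\geq2$ every occurrence of $F_{d,m}\delta_m^{-1}$ in either host word that is not flush with the right end extends to an occurrence of $F_{d,m}$, and Lemma~3.1 plus your rightmost-position check finish the proof; $m=1$ is your hand computation. With this replacement your induction on $m$ and the under-specified block-alignment enumeration become unnecessary.
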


\begin{proof} The proof could be obtained by Lemma 3.1 and by a similar argument.\end{proof}

Using the two lemmas above, we can determine the expression of $G_0(K_{d,m,i})$ as follow.

\begin{Theorem} The prefix of $F_{d,\infty}$ before $K_{d,m,i,1}$ is
$F_{d,\infty}[1,f_{d,m}-1]$, denoted by $G_0(K_{d,m,i})$.
\end{Theorem}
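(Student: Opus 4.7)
The plan is to establish two claims: (a) $K_{d,m,i}$ occurs at position $f_{d,m}$ in $F_{d,\infty}$, and (b) it does not occur at any $j\in\{1,\ldots,f_{d,m}-1\}$. Together these pin down the first occurrence $K_{d,m,i,1}$ at position $f_{d,m}$, so the preceding prefix is $F_{d,\infty}[1,f_{d,m}-1]$.

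For (a), I would read letters directly from $F_{d,\infty}$ using that it has $F_{d,m+1}=F_{d,m}^d F_{d,m-1}$ as a prefix. Position $f_{d,m}$ is the last letter $\delta_m$ of the first copy of $F_{d,m}$; the next $if_{d,m}$ letters are the next $i$ copies of $F_{d,m}$ (admissible since $i\le d-1$); and the following $f_{d,m-1}-1$ letters are the initial segment of the subsequent block, which is either another $F_{d,m}$ (when $i<d-1$) or $F_{d,m-1}$ itself (when $i=d-1$). In both cases, since $F_{d,m}$ begins with $F_{d,m-1}$, these letters equal $F_{d,m-1}\delta_{m-1}^{-1}$, and the concatenation is exactly $\delta_m F_{d,m}^i F_{d,m-1}\delta_{m-1}^{-1}=K_{d,m,i}$.

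For (b), I would first reduce to the case $i=0$: $K_{d,m,0}\triangleleft K_{d,m,i}$ for every $i$, since the first $f_{d,m-1}$ letters of $\delta_m F_{d,m}^i F_{d,m-1}\delta_{m-1}^{-1}$ are $\delta_m F_{d,m-1}\delta_{m-1}^{-1}=K_{d,m,0}$ (again using that $F_{d,m}$ starts with $F_{d,m-1}$). Hence every occurrence of $K_{d,m,i}$ begins with an occurrence of $K_{d,m,0}$, and it suffices to rule out early occurrences of $K_{d,m,0}$. Now $K_{d,m,0}$ starts with $\delta_m$ and has length-$(f_{d,m-1}-1)$ suffix $F_{d,m-1}\delta_{m-1}^{-1}$, so an occurrence at $j$ demands both $F_{d,\infty}[j]=\delta_m$ and an occurrence of $F_{d,m-1}\delta_{m-1}^{-1}$ at $j+1$. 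Applying Lemma 3.2(1) inside each pair of consecutive $F_{d,m-1}$'s in $F_{d,m-1}^d$ and Lemma 3.2(2) inside the junction $F_{d,m-1}F_{d,m-2}F_{d,m-1}$ at the tail of the prefix $F_{d,m}F_{d,m-1}$ of $F_{d,\infty}$ shows that $F_{d,m-1}\delta_{m-1}^{-1}$ starts in this prefix exactly at positions $kf_{d,m-1}+1$ for $0\le k\le d$ and at $f_{d,m}+1$; equivalently $j\in\{0,f_{d,m-1},2f_{d,m-1},\ldots,df_{d,m-1},f_{d,m}\}$. A final letter check eliminates all but $j=f_{d,m}$: at each $j=kf_{d,m-1}$ with $1\le k\le d$ the letter $F_{d,\infty}[j]$ is the last letter $\delta_{m-1}$ of the $k$-th copy of $F_{d,m-1}$, so it fails to equal $\delta_m$, while $F_{d,\infty}[f_{d,m}]=\delta_m$.

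I expect the main obstacle to be verifying that the enumeration of occurrences of $F_{d,m-1}\delta_{m-1}^{-1}$ in $F_{d,m}F_{d,m-1}$ is exhaustive — that is, that iterating Lemma 3.2 on the pieces $F_{d,m-1}^d$ and $F_{d,m-1}F_{d,m-2}F_{d,m-1}$ really accounts for every starting position with no stray match slipping in between — together with handling the degenerate base case $m=1$ (where $F_{d,0}\delta_0^{-1}=\varepsilon$ and Lemma 3.2 must be replaced by direct inspection of the prefix $a^db\cdots$).
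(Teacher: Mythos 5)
Your proposal is correct and follows essentially the same route as the paper: locate $K_{d,m,i}$ at position $f_{d,m}$ by reading off the prefix $F_{d,m}^dF_{d,m-1}$, then exclude earlier positions by enumerating the $d+2$ candidate occurrences of $F_{d,m-1}\delta_{m-1}^{-1}$ via Lemma 3.2 and checking that the preceding letter is $\delta_{m-1}\neq\delta_m$. The only (harmless) divergence is that for $1\leq i\leq d-1$ you reduce to $i=0$ via the prefix relation $K_{d,m,0}\triangleleft K_{d,m,i}$, whereas the paper instead applies Lemma 3.1 to the embedded copy of $F_{d,m}$; your explicit flagging of the degenerate case $m=1$ (where $F_{d,0}\delta_0^{-1}=\varepsilon$) is a point the paper glosses over.
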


\begin{proof} When $m=0$, $K_{d,0,i}=a^{i+1}$, which occurs at position 1 in $F_{d,\infty}$. So $G_0(K_{d,0,i})=\varepsilon$, the theorem holds. When $m\geq1$, there are two steps.

\vspace{0.2cm}

\textbf{Step 1.} We are going to show $K_{d,m,i}$ occurs at position $f_{d,m}$.

(1) When $0\leq i\leq d-2$. Since $F_{d,m}\triangleleft F_{d,\infty}$ and $F_{d,m}=F_{d,m-1}^dF_{d,m-2}$, then
$$F_{d,m}^d=F_{d,m}\delta_{m}^{-1}\ast
\underline{\delta_{m}F_{d,m}^{i}F_{d,m-1}\delta_{m-1}^{-1}}
\ast\delta_{m-1}F_{d,m-1}^{d-1}F_{d,m-2}F_{d,m}^{d-i-2}.$$

(2) When $i=d-1$. Since $F_{d,m}F_{d,m-1}\triangleleft F_{d,\infty}$, then
$$F_{d,m}^dF_{d,m-1}=F_{d,m}\delta_{m}^{-1}\ast \underline{\delta_{m}F_{d,m}^{d-1}F_{d,m-1}\delta_{m-1}^{-1}}\ast \delta_{m-1}.$$

So $K_{d,m,i}=\delta_m\ast F_{d,m}^i\ast F_{d,m-1}\ast \delta_{m-1}^{-1}$ occurs in $F_{d,m}$ at position $f_{d,m}$ when $m\geq1$.

\vspace{0.2cm}

\textbf{Step 2.} We are going to show $f_{d,m}$ is the first position of $K_{d,m,i}$.

(1) When $i=0$, $K_{d,m,0}=\delta_{m}\underline{F_{d,m-1}\delta_{m-1}^{-1}}$. Using Lemma 3.2, the $d+2$ possible positions of $F_{d,m-1}\delta_{m-1}^{-1}$ in $F_{d,m}F_{d,m-1}\delta_{m-1}^{-1}$ are shown as $[i]$ in the next figure, which is plotted as $d=3$.
\setlength{\unitlength}{1mm}
\begin{center}
\begin{picture}(90,18)
\linethickness{3pt}
\put(0,6){\line(1,0){86}}
\linethickness{1pt}
\put(0,0){\line(0,1){18}}
\put(66,0){\line(0,1){18}}
\put(20,0){\line(0,1){10}}
\put(40,0){\line(0,1){10}}
\put(60,0){\line(0,1){10}}
\put(86,0){\line(0,1){10}}
\put(4,8){$F_{d,m-1}$}
\put(24,8){$F_{d,m-1}$}
\put(44,8){$F_{d,m-1}$}
\put(70,8){$F_{d,m-1}$}
\put(25,15){\vector(-1,0){25}}
\put(41,15){\vector(1,0){25}}
\put(29,14){$F_{d,m}$}
\put(0,1){$[1]$}
\put(20,1){$[2]$}
\put(40,1){$[3]$}
\put(60,1){$[4]$}
\put(66,1){$[5]$}
\end{picture}
\end{center}
\centerline{Fig. 3.3: The $d+2$ possible positions of $F_{d,m-1}$.}

\vspace{0.2cm}

Obviously, the $F_{d,m-1}\delta_{m-1}^{-1}$ in $K_{d,m,0}=\delta_{m}\underline{F_{d,m-1}\delta_{m-1}^{-1}}$ occurs at position [5], we are going to exclude all other possible positions. Since there is no letters ahead, $F_{d,m-1}\delta_{m-1}^{-1}$ in $K_{d,m,i}$ cannot occurs at position [1].
Since the letter ahead position [2] (resp. [3] and [4]) is $\delta_{m-1}$, which is not equal to $\delta_{m}$, so $F_{d,m-1}\delta_{m-1}^{-1}$ in $K_{d,m,i}$ cannot occurs at this position too.

(2) When $1\leq i\leq d-1$. Consider the first $F_{d,m}$ in $K_{d,m,i}=\delta_{m}\underline{F_{d,m}}F_{d,m}^{i-1}F_{d,m-1}\delta_{m-1}^{-1}$,
Suppose $f_{d,m}$ is not the first position of $K_{d,m,i}$, then by Lemma 3.1, the $F_{d,m}$ occurs at position 1. Since $K_{d,m,i}$ has a letter $\delta_{m}$ before the $F_{d,m}$, so $K_{d,m,i}$ can not occur before position $f_{d,m}$.
\end{proof}

\begin{definition}[$L(\omega,p)$] Let $L(\omega,p)$ be the position of the $p$-th occurrence of factor $\omega$.\end{definition}

\begin{Theorem}[First two distinct gaps of kernel word $K_{d,m,i}$]\

Let $G_A(K_{d,m,i})=G_1(K_{d,m,i})$ and $B=\min\{p:G_p(K_{d,m,i})\neq G_1(K_{d,m,i})\}$,
then
\begin{equation*}
(1)~G_A(K_{d,m,i})=\begin{cases}
\varepsilon~(if~m=0)~\text{or}~K_{d,m-1,d-1}~(if~m\geq1),&i=0;\\
K_{d,m,i-1}^{-1},&1\leq i\leq d-2;\\
K_{d,m+1,0},&i=d-1
\end{cases}
\end{equation*}
\begin{equation*}
(2)~G_B(K_{d,m,i})=\begin{cases}
K_{d,m+1,0},&0\leq i\leq d-2;\\
K_{d,m,d-2}^{-1},&i=d-1.
\end{cases}
\end{equation*}
\begin{equation*}
(3)~B=\begin{cases}
d-i,&0\leq i\leq d-2;\\
d+1,&i=d-1.
\end{cases}
\end{equation*}
\end{Theorem}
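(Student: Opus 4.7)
The argument is by case analysis on $i\in\{0,1,\dots,d-1\}$, each case handled by direct computation with the recursive decompositions $F_{d,m+1}=F_{d,m}^d F_{d,m-1}$ and $F_{d,m+2}=F_{d,m+1}^d F_{d,m}$. Theorem 3.3 already furnishes the first occurrence at position $f_{d,m}$; I would then locate the next $B$ occurrences inside an explicit prefix of $F_{d,\infty}$ ($F_{d,m+2}$ suffices for $0\le i\le d-2$, while $F_{d,m+3}$ is needed for $i=d-1$) and compute each gap directly from the block structure. The trivial case $m=0$ is handled by direct inspection of $F_{d,\infty}=a^db\cdots$, so I focus on $m\geq 1$ in what follows.

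For $i=0$: the $d$ occurrences of $K_{d,m,0}=\delta_m F_{d,m-1}\delta_{m-1}^{-1}$ inside $F_{d,m+1}$ sit at $kf_{d,m}$ for $k=1,\dots,d$; each uses the last letter of the $k$-th $F_{d,m}$-block as its leading $\delta_m$ and the leading $F_{d,m-1}$ of the next block as its tail. Reading positions $[kf_{d,m}+f_{d,m-1},(k{+}1)f_{d,m}-1]$ inside $F_{d,m}=F_{d,m-1}^d F_{d,m-2}$ yields $\delta_{m-1}F_{d,m-1}^{d-1}F_{d,m-2}\delta_{m-2}^{-1}=K_{d,m-1,d-1}$, so $G_1=\dots=G_{d-1}=K_{d,m-1,d-1}$. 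The next occurrence appears at $f_{d,m+1}+f_{d,m}$ (the analogous position inside the second $F_{d,m+1}$-block of $F_{d,m+2}$), and the intervening factor reads $\delta_{m+1}F_{d,m}\delta_m^{-1}=K_{d,m+1,0}=G_d$. For $1\le i\le d-2$ the same block counting yields $d-i$ occurrences inside $F_{d,m+1}$; adjacent ones now overlap, and the overlap factor is $\delta_m F_{d,m}^{i-1}F_{d,m-1}\delta_{m-1}^{-1}=K_{d,m,i-1}$, while the boundary crossing to the second $F_{d,m+1}$ again contributes $K_{d,m+1,0}=G_{d-i}$.

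For $i=d-1$ only one occurrence fits inside each $F_{d,m+1}$, so I would work inside $F_{d,m+2}=F_{d,m+1}^d F_{d,m}$. The first $d$ occurrences sit at $kf_{d,m+1}+f_{d,m}$ for $k=0,\dots,d-1$, and each consecutive pair is separated by the boundary factor $\delta_{m+1}F_{d,m}\delta_m^{-1}=K_{d,m+1,0}$. The key additional occurrence is at $f_{d,m+2}$ itself: the last letter of the trailing $F_{d,m}$-block of $F_{d,m+2}$ serves as the $\delta_m$, and the prefix of the next $F_{d,m+2}$ supplies the tail $F_{d,m}^{d-1}F_{d,m-1}\delta_{m-1}^{-1}$ (valid because $F_{d,m-1}$ is a prefix of $F_{d,m}$). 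The gap to it is again $K_{d,m+1,0}$, so $G_1=\dots=G_d=K_{d,m+1,0}$. Finally the $(d{+}2)$-th occurrence is at $f_{d,m+2}+f_{d,m}$ and overlaps the previous one in the factor $\delta_m F_{d,m}^{d-2}F_{d,m-1}\delta_{m-1}^{-1}=K_{d,m,d-2}$, yielding $G_{d+1}=K_{d,m,d-2}^{-1}$.

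The delicate point throughout is the completeness argument: I must rule out extra $K_{d,m,i}$ occurrences lying between those I have listed. The coincidence of the prefix of $F_{d,m}$ with $F_{d,m-1}$ (the same feature exploited positively in case $i=d-1$) could in principle produce spurious candidate positions near block boundaries. Lemmas 3.1--3.2 exactly prevent this: any $F_{d,m}$-factor appearing inside $F_{d,m}F_{d,m}$ or $F_{d,m}F_{d,m-1}F_{d,m}$ must be block-aligned, which pins the admissible start positions of $K_{d,m,i}$ down to exactly the ones enumerated above.
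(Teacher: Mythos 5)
Your proposal is correct and follows essentially the same route as the paper: first occurrence from Theorem 3.3, subsequent occurrences located via the block decompositions $F_{d,m+1}=F_{d,m}^{d}F_{d,m-1}$ and $F_{d,m+2}=F_{d,m+1}^{d}F_{d,m}$, completeness via the block-alignment Lemmas 3.1--3.2, and the gap words read off directly from the positions. The paper only writes out the case $1\leq i\leq d-2$ and declares the others similar, whereas you spell out $i=0$ and $i=d-1$ (including the boundary occurrence at $f_{d,m+2}$ needed to get $B=d+1$), but the method is identical.
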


\begin{proof} The proofs of $i=0$, $1\leq i\leq d-2$ and $i=d-1$ are similar, we prefer to take $1\leq i\leq d-2$ for example. By Theorem 3.3, we know $L(K_{d,m,i},1)=f_{d,m}$.
Furthermore
$$F_{d,m+1}=F_{d,m}F_{d,m}\delta_m^{-1}\delta_mF_{d,m}^iF_{d,m}^{d-i-2}F_{d,m-1}.$$
Since $d-i-2\geq0$, $K_{d,m,i}$ is at position $2\ast f_{d,m}$. Using Lemma 3.1, $F_{d,m}$ occurs in $F_{d,m}F_{d,m}$ only twice, so the position of $K_{d,m,i}$ cannot be between $f_{d,m}$ to $2\ast f_{d,m}$, i.e $L(K_{d,m,i},2)=2\ast f_{d,m}$.

Similarly, we can determine the positions of the first $d-i+1$ kernel word $K_{d,m,i}$:

(a) $L(K_{d,m,i},p)=p\ast f_{d,m}$ for $p=1,2,\ldots,d-i$;

(b) $L(K_{d,m,i},d-i+1)=f_{d,m+1}+f_{d,m}$.

By the definition of $B$, we know $B=d-i$.

\vspace{0.2cm}

The next figure is plotted as $d=4$ and $i=2$. In this case, $B=d-i=2$.
The figure shows the positions of $K_{d,m,i,p}$ for $p=1,2,3$. Using this, we can determine the expressions of $G_A(K_{d,m,i})$ and $G_B(K_{d,m,i})$.

\footnotesize
\setlength{\unitlength}{0.8mm}
\begin{center}
\begin{picture}(175,27)
\linethickness{3pt}
\put(0,6){\line(1,0){172}}
\linethickness{2pt}
\put(18,14){\line(1,0){46}}
\put(38,22){\line(1,0){46}}
\put(104,17){\line(1,0){46}}
\linethickness{1pt}
\put(0,0){\line(0,1){10}}
\put(20,6){\line(0,1){4}}
\put(40,6){\line(0,1){4}}
\put(60,6){\line(0,1){4}}
\put(80,6){\line(0,1){4}}
\put(86,0){\line(0,1){10}}
\put(106,6){\line(0,1){4}}
\put(126,6){\line(0,1){4}}
\put(146,6){\line(0,1){4}}
\put(166,6){\line(0,1){4}}
\put(172,0){\line(0,1){10}}
\put(6,8){$F_{d,m}$}
\put(26,8){$F_{d,m}$}
\put(46,8){$F_{d,m}$}
\put(66,8){$F_{d,m}$}
\put(92,8){$F_{d,m}$}
\put(112,8){$F_{d,m}$}
\put(132,8){$F_{d,m}$}
\put(152,8){$F_{d,m}$}
\put(37,1){$F_{d,m+1}$}
\put(123,1){$F_{d,m+1}$}
\put(35,3){\vector(-1,0){35}}
\put(51,3){\vector(1,0){35}}
\put(121,3){\vector(-1,0){35}}
\put(137,3){\vector(1,0){35}}
\put(34,16){$K_{d,m,i,1}$}
\put(18,14){\line(0,1){2}}
\put(64,14){\line(0,1){2}}
\put(54,24){$K_{d,m,i,2}$}
\put(38,22){\line(0,1){2}}
\put(84,22){\line(0,1){2}}
\put(120,19){$K_{d,m,i,3}$}
\put(104,17){\line(0,1){2}}
\put(150,17){\line(0,1){2}}
\end{picture}
\end{center}
\normalsize
\centerline{Fig. 3.4: The expressions of gaps $G_A(K_{d,m,i})$ and $G_B(K_{d,m,i})$.}

\vspace{0.2cm}

By Fig. 3.4, we know the expressions of the gaps are:

(1') $G_A(K_{d,m,i})=[\delta_mF_{d,m}^{i-1}F_{d,m-1}\delta_{m-1}^{-1}]^{-1}=K_{d,m,i-1}^{-1}$;

(2') $G_B(K_{d,m,i})=\delta_{m-1}F_{d,m}\delta_m^{-1}=K_{d,m+1,0}$.
\end{proof}

In order to determine the gap sequences of all kernel words, we need introduce a new notation:
$\sigma_i(F_{d,\infty})$. For instance,
since $F_{3,\infty}=aaabaaabaaabaaaab\cdots$ and $\sigma_2(a,b):=(aab,a)$, then
$$\sigma_2(F_{3,\infty})=\underbrace{aab}_a\underbrace{aab}_a\underbrace{aab}_a \underbrace{a}_b\underbrace{aab}_a\underbrace{aab}_a\underbrace{aab}_a \underbrace{a}_b \underbrace{aab}_a\underbrace{aab}_a\underbrace{aab}_a \underbrace{a}_b \underbrace{aab}_a\underbrace{aab}_a\underbrace{aab}_a\underbrace{aab}_a \underbrace{a}_b\cdots$$



\begin{Theorem}[Gap sequence of kernel word $K_{d,m,i}$]\

(1) The kernel word $K_{d,m,i}$ has exactly two distinct gaps $G_A(K_{d,m,i})$ and $G_B(K_{d,m,i})$;

(2) The gap sequence $\{G_p(K_{d,m,i})\}_{p\geq1}$ is sequence $\mathcal{S}$ on $\{G_A(K_{d,m,i}),G_B(K_{d,m,i})\}$, where
\begin{equation*}
\mathcal{S}=\begin{cases}
\sigma_{d-i-1}(F_{d,\infty}),&0\leq i\leq d-2;\\
F_{d,\infty},&i=d-1.
\end{cases}
\end{equation*}
\end{Theorem}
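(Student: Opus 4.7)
The plan is to prove parts~(1) and~(2) together by determining the complete set of occurrences of $K_{d,m,i}$ in $F_{d,\infty}$ and then reading off the gap sequence directly. The main tool is the self-similarity $F_{d,\infty}=\sigma_d^{m+1}(F_{d,\infty})$, which decomposes $F_{d,\infty}$ as a concatenation of blocks: every letter $F_{d,\infty}[s]=a$ contributes a block $F_{d,m+1}$, and every $F_{d,\infty}[s]=b$ contributes a block $F_{d,m}$. Theorem~3.5 already pins down the first $d-i$ gaps (respectively the first $d$ gaps when $i=d-1$) and identifies the two candidate words $G_A$ and $G_B$; my task is to promote this local information to a global description.

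The core claim is the following classification of occurrences of $K_{d,m,i}$ relative to the block decomposition. Every $a$-block contains exactly $d-i$ \emph{interior} occurrences, located at offsets $p\cdot f_{d,m}$ ($p=1,\dots,d-i$) within the block; every $b$-block contains exactly one \emph{boundary} occurrence, starting at the last letter of that block and extending into the following $a$-block. Existence of the listed occurrences is a direct computation from the factorization $F_{d,m+1}=F_{d,m}^d F_{d,m-1}$ and the definition $K_{d,m,i}=\delta_m F_{d,m}^i F_{d,m-1}\delta_{m-1}^{-1}$, in the spirit of Step~1 of the proof of Theorem~3.5. The main obstacle is uniqueness: ruling out every other candidate start position. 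I would handle this using Lemmas~3.1 and 3.2 on the positions of $F_{d,m}$ in $F_{d,m}F_{d,m}$ and $F_{d,m}F_{d,m-1}F_{d,m}$, combined with the letter mismatch $\delta_m\neq\delta_{m-1}$ from Property~2.2, in parallel to Step~2 of the proof of Theorem~3.5; the catch is that one must verify the exclusion across block boundaries of the $\sigma_d^{m+1}$-decomposition, not just within a single block.

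Granted the classification, reading off the gap sequence is mechanical. Within an $a$-block, two consecutive interior occurrences are $f_{d,m}$ positions apart, and the intervening (or overlapping) letters can be identified by direct substitution, giving the gap $G_A$ in each of the $d-i-1$ cases. The transition from the last interior occurrence of an $a$-block to the first occurrence of the next cluster (either an interior of the next $a$-block, or the boundary of a $b$-block) yields $G_B$ by the same kind of computation. Finally, the transition from a $b$-block's boundary occurrence to the first interior of the next $a$-block yields $G_A$ when $0\le i\le d-2$ and $G_B$ when $i=d-1$; this is exactly the sign change of $|K_{d,m,i}|-f_{d,m}$ that the Theorem~3.5 formulas already reflect. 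Summing per letter of the parent sequence, each $a\in F_{d,\infty}$ contributes the pattern $G_A^{d-i-1}G_B$ and each $b$ contributes $G_A$ when $0\le i\le d-2$, so under the identification $G_A\leftrightarrow a$, $G_B\leftrightarrow b$ these contributions are precisely $\sigma_{d-i-1}(a)$ and $\sigma_{d-i-1}(b)$, and the gap sequence equals $\sigma_{d-i-1}(F_{d,\infty})$; in the exceptional case $i=d-1$ each $a$ contributes $G_A$ and each $b$ contributes $G_B$, yielding $F_{d,\infty}$ itself. Since only the two gap words $G_A$ and $G_B$ appear in either case, part~(1) follows as well.
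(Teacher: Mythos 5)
Your proposal is sound, but it follows a genuinely different route from the paper. The paper does not enumerate occurrences globally: its proof is a renormalization argument under a single application of $\sigma_d$. Writing $\alpha=\sigma_{d-i-1}(a)$ and $\beta=\sigma_{d-i-1}(b)$, it verifies the two word identities $\sigma_d\bigl([KG_A]^{d-i-1}[KG_B]\bigr)=\eta\ast\{[KG_A]^{d-i-1}[KG_B]\}^d[KG_A]\ast\eta^{-1}$ and $\sigma_d(KG_A)=\eta\ast[KG_A]^{d-i-1}[KG_B]\ast\eta^{-1}$ by direct computation, checks that the shift $\eta$ is absorbed into the prefix via $\sigma_d(G_0)=G_0\ast\eta$, and concludes that the tiling of $F_{d,\infty}$ by kernel occurrences and gaps is invariant under the induced substitution $\alpha\mapsto\alpha^d\beta$, $\beta\mapsto\alpha$, hence follows the fixed-point pattern $\sigma_{d-i-1}(F_{d,\infty})$; the base case is supplied by Theorem 3.5. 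Your approach instead decomposes $F_{d,\infty}$ once and for all by $\sigma_d^{m+1}$ and classifies every occurrence of $K_{d,m,i}$ as interior to an $a$-block or straddling a $b$-block boundary, then reads the gaps off letter by letter. What your route buys is that parts (1) and (2) come out simultaneously and you get the positions of all occurrences for free (the paper has to derive these separately in Proposition 5.1); what it costs is that the exclusion argument of Lemmas 3.1--3.2 and the letter mismatch $\delta_m\neq\delta_{m-1}$ must be run across every block boundary rather than only on the initial segment, which you correctly identify as the main obstacle --- this is the same technical content as Step 2 of Theorem 3.3 and the latter part of Theorem 3.5, repeated in all positions, so it is extra bookkeeping rather than a new idea. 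I checked your gap attributions (each $a$-block contributing $G_A^{d-i-1}G_B$, each $b$-block contributing $G_A$ for $0\leq i\leq d-2$, and the swap at $i=d-1$) against the definitions and Theorem 3.5, and they are consistent, including the point that the gap following the last occurrence attached to a block is the same word whether the next block is an $a$-block or a $b$-block.
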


\begin{proof} The proofs of $i=0$, $1\leq i\leq d-2$ and $i=d-1$ are similar, we take $1\leq i\leq d-2$ for example. The other two cases could be obtained by similar arguments.

\textbf{Step 1.} By the definition of sequence $\sigma_{d-i-1}(F_{d,\infty})$, we have
$\sigma_d(\alpha)=\alpha^d\beta$ and $\sigma_d(\beta)=\alpha$, where $\alpha=\sigma_{d-i-1}(a)=a^{d-i-1}b$ and $\beta=\sigma_{d-i-1}(b)=a$. That means
$$\sigma_d(a^{d-i-1}b)=[a^{d-i-1}b]^da,
~\sigma_d(a)=a^{d-i-1}b.$$
So we only need to prove the properties (1') and (2') below, where $\eta=\delta_{m}F_{d,m}^{d-1}F_{d,m-1}\sigma_d(\delta_{m})^{-1}$ is a shift. We write $G_A:=G_A(K_{d,m,i})$ and $G_B:=G_B(K_{d,m,i})$ for short.

(1') $\sigma_d([K_{d,m,i}G_A]^{d-i-1}[K_{d,m,i}G_B])=\eta\ast
\{[K_{d,m,i}G_A]^{d-i-1}[K_{d,m,i}G_B]\}^d[K_{d,m,i}G_A]
\ast\eta^{-1};$

(2') $\sigma_d(K_{d,m,i}G_A)=\eta\ast [K_{d,m,i}G_A]^{d-i-1}[K_{d,m,i}G_B]\ast\eta^{-1}$.

\vspace{0.2cm}

\textbf{Step 2.} When $1\leq i\leq d-2$, $G_A(K_{d,m,i})=K_{d,m,i-1}^{-1}$ and $G_B(K_{d,m,i})=K_{d,m+1,0}$. So

(a) $K_{d,m,i}G_A=\delta_{m}F_{d,m}\delta_{m}^{-1}$;

(b) $K_{d,m,i}G_B=\delta_{m}F_{d,m}^{i}F_{d,m-1}F_{d,m}\delta_{m}^{-1}$;

(c) $[K_{d,m,i}G_A(K_{d,m,i})]^{d-i-1}[K_{d,m,i}G_B(K_{d,m,i})]=\delta_{m}F_{d,m}^{d-1}F_{d,m-1}F_{d,m}\delta_{m}^{-1}$.

\vspace{0.2cm}

To prove property (1'), we have
\begin{equation*}
\begin{split}
&\sigma_d([K_{d,m,i}G_A]^{d-i-1}[K_{d,m,i}G_B])\\
=&\sigma_d(\delta_{m}F_{d,m}^{d-1}F_{d,m-1}F_{d,m}\delta_{m}^{-1})
=\sigma_d(\delta_{m})F_{d,m+1}^{d-1}F_{d,m}F_{d,m+1}\sigma_d(\delta_{m})^{-1}\\
&\{[K_{d,m,i}G_A]^{d-i-1}[K_{d,m,i}G_B]\}^d[K_{d,m,i}G_A]\\
=&[\delta_{m}F_{d,m}^{d-1}F_{d,m-1}F_{d,m}\delta_{m}^{-1}]^d\delta_{m}F_{d,m}\delta_{m}^{-1}
=\delta_{m}F_{d,m}^{d-1}F_{d,m-1}F_{d,m+1}^{d-1}F_{d,m}^2\delta_{m}^{-1}\\
=&\eta\ast \sigma_d(\delta_{m})
F_{d,m+1}^{d-1}F_{d,m}F_{d,m}\delta_{m}^{-1}\ast
\delta_{m}F_{d,m}^{d-1}F_{d,m-1}\sigma_d(\delta_{m})^{-1}\ast\eta^{-1}\\
=&\eta\ast \sigma_d(\delta_{m})
F_{d,m+1}^{d-1}F_{d,m}F_{d,m+1}\sigma_d(\delta_{m})^{-1}\ast\eta^{-1}
\end{split}
\end{equation*}

To prove property (2'), we have
\begin{equation*}
\begin{split}
&\sigma_d(K_{d,m,i}G_A)=\sigma_d(\delta_{m}F_{d,m}\delta_{m}^{-1})
=\sigma_d(\delta_{m})F_{d,m+1}\sigma_d(\delta_{m})^{-1}\\
&[K_{d,m,i}G_A]^{d-i-1}(K_{d,m,i}G_B)
=\delta_{m}F_{d,m}^{d-1}F_{d,m-1}F_{d,m}\delta_{m}^{-1}\\
=&\eta\ast \sigma_d(\delta_{m})F_{d,m}\delta_{m}^{-1}
\delta_{m}F_{d,m}^{d-1}F_{d,m-1}\sigma_d(\delta_{m})^{-1}
\ast \eta^{-1}
=\eta\ast \sigma_d(\delta_{m})F_{d,m+1}\sigma_d(\delta_{m})^{-1}
\ast \eta^{-1}
\end{split}
\end{equation*}

\textbf{Step 3.} We must prove the shift $\eta$ is allowable. In fact, $G_0(K_{d,m,i})=F_{d,m}\delta_m^{-1}$, then
$$\sigma_d(G_0(K_{d,m,i}))=F_{d,m+1}\sigma_d(\delta_m)^{-1}
=F_{d,m}\delta_m^{-1}\ast \delta_mF_{d,m}^{d-1}F_{d,m-1}\sigma_d(\delta_m)^{-1}
=G_0(K_{d,m,i})\ast \eta.$$
So under the substitution $\sigma_d$, there is a shift $\eta$.
\end{proof}

\noindent\textbf{Example.} Consider kernel word $K_{3,2,0}=aaaa$ in sequence $F_{3,\infty}$. $G_0(K_{3,2,0})=aaabaaabaaab$, two distinct gaps of the word $K_{3,2,0}$ are $$G_A(K_{3,2,0})=G_1(K_{3,2,0})=baaabaaab,~G_B(K_{3,2,0})=G_d(K_{3,2,0})=baaabaaabaaab.$$
\begin{equation*}
\begin{split}
F_{3,\infty}=&\underbrace{aaabaaabaaab}_{G_0}(aaaa)\underbrace{baaabaaab}_{A}(aaaa)\underbrace{baaabaaab}_A(aaaa)\underbrace{baaabaaabaaab}_B(aaaa)\\
&\underbrace{baaabaaab}_{A}(aaaa)\underbrace{baaabaaab}_A(aaaa)\underbrace{baaabaaabaaab}_B(aaaa)\underbrace{baaabaaab}_{A}(aaaa)\\
&\underbrace{baaabaaab}_A(aaaa)\underbrace{baaabaaabaaab}_B(aaaa)\underbrace{baaabaaab}_{A}(aaaa)\underbrace{baaabaaab}_{A}(aaaa)\\
&\underbrace{baaabaaab}_A(aaaa)\underbrace{baaabaaabaaab}_B(aaaa)\underbrace{baaabaaab}_{A}(aaaa)\underbrace{baaabaaab}_{A}(aaaa)\\
&\underbrace{baaabaaabaaab}_B(aaaa)\underbrace{baaabaaab}_{A}(aaaa)\underbrace{baaabaaab}_A(aaaa)\underbrace{baaabaaabaaab}_B(aaaa)\cdots\\
\end{split}
\end{equation*}
We see that the gap sequence is $\{G_p(aaaa)\}_{p\geq1}=AABAABAABAAABAABAAB\cdots$, which is the sequence $\sigma_{d-1}(F_{d,\infty})=\sigma_2(F_{3,\infty})$ on $\{A,B\}:=\{G_A(K_{d,m,0}),G_B(K_{d,m,0})\}$.


\vspace{0.5cm}

\stepcounter{section}

\noindent\textbf{\large{4.~Gaps and Gap Sequence of Envelope Word $E_{d,m,i}$}}

\vspace{0.4cm}

By similar arguments as in Section 3, we can determine the gaps and gap sequence of envelope word $E_{d,m,i}$. In this section, we only list the results.

\begin{Theorem} The prefix of $F_{d,\infty}$ before $E_{d,m,i,1}$ is $\varepsilon$, denoted by $G_0(E_{d,m,i})$.
\end{Theorem}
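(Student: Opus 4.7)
The plan is to show that $E_{d,m,i}$ occurs as a prefix of $F_{d,\infty}$; combined with the trivial fact that any occurrence starts at position $\geq 1$, this forces $G_0(E_{d,m,i})=\varepsilon$.

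The main tools will be Proposition 2.10 and Theorem 3.3. By Proposition 2.10, $E_{d,m,i}=\mu_1 K_{d,m,i}\mu_2$; substituting $K_{d,m+1,0}=\delta_{m+1}F_{d,m}\delta_m^{-1}$ into $\mu_1=\delta_{m+1}^{-1}K_{d,m+1,0}$ gives $\mu_1=F_{d,m}\delta_m^{-1}$, a word of length $f_{d,m}-1$. Since $F_{d,m}\triangleleft F_{d,\infty}$, the initial $f_{d,m}-1$ letters of $F_{d,\infty}$ are exactly $\mu_1$; Theorem 3.3 then places the first copy of $K_{d,m,i}$ immediately after, at position $f_{d,m}$. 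Hence $\mu_1 K_{d,m,i}$ is already a prefix of $F_{d,\infty}$, matching $F_{d,\infty}[1,(i+1)f_{d,m}+f_{d,m-1}-1]$.

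What remains is to verify that the following $|\mu_2|=f_{d,m}-1$ letters of $F_{d,\infty}$ spell $\mu_2=K_{d,m+1,0}\delta_{m+1}^{-1}$. I would handle this by the same case split used in Theorem 3.3. For $0\leq i\leq d-2$ the first occurrence of $K_{d,m,i}$ ends strictly inside the prefix $F_{d,m+1}=F_{d,m}^d F_{d,m-1}$ of $F_{d,\infty}$, so the next $f_{d,m}-1$ letters can be read directly from the residual block $F_{d,m}^{d-i-1}F_{d,m-1}$. For $i=d-1$, $K_{d,m,d-1}$ ends exactly at position $f_{d,m+1}$ and $F_{d,\infty}$ continues with another $F_{d,m+1}$, since $F_{d,m+2}=F_{d,m+1}^d F_{d,m}$ contains $F_{d,m+1}^2$ as a prefix for $d\geq 2$. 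In either case a short computation identifies the next $f_{d,m}-1$ letters with $\mu_2$.

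The main obstacle is this last tail-matching: one must carefully track how the prepended $\delta_{m+1}$ and the trailing $\delta_{m+1}^{-1}$ in $\mu_2$ cancel against the local structure of the $F_{d,m}$-block that actually appears in $F_{d,\infty}$. A slicker alternative would first use the classical swap identity stating that $F_{d,m-1}F_{d,m}$ and $F_{d,m}F_{d,m-1}$ share a common prefix of length $f_{d,m}+f_{d,m-1}-2$ and differ only in their final two (swapped) letters; applying this rewrites $E_{d,m,i}=F_{d,m}^{i+2}F_{d,m-1}\delta_{m-1}^{-1}\delta_m^{-1}$, after which $F_{d,m}^{i+2}F_{d,m-1}$ is visibly a prefix of $F_{d,m+1}$ when $i\leq d-2$, and agrees with the corresponding length-$|E_{d,m,d-1}|$ prefix of $F_{d,m+1}^2\triangleleft F_{d,m+2}$ up to the last two letters when $i=d-1$. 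That identity is not stated in the excerpt and would itself have to be verified by an induction on $m$, so either route ultimately requires the same bookkeeping around the boundary between consecutive $F_{d,m}$-blocks.
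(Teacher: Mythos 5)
The paper gives no explicit proof of this theorem --- Section 4 simply asserts that the results follow ``by similar arguments as in Section 3'' --- so your proposal fills a deliberately omitted argument rather than paralleling a written one. Your route is sound and arguably more economical than redoing the Section 3 analysis from scratch for $E_{d,m,i}$: you reuse Theorem 3.3 (the first $K_{d,m,i}$ starts at position $f_{d,m}$) together with Proposition 2.10 ($E_{d,m,i}=\mu_1 K_{d,m,i}\mu_2$ with $\mu_1=\delta_{m+1}^{-1}K_{d,m+1,0}=F_{d,m}\delta_m^{-1}$), so that $\mu_1 K_{d,m,i}$ is automatically the prefix $F_{d,\infty}[1,(i+1)f_{d,m}+f_{d,m-1}-1]$ and only the $f_{d,m}-1$ letters of $\mu_2$ remain to be matched. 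That last step, which you only sketch, does go through, and your ``slicker alternative'' is the cleanest way to finish: since the last two letters of $F_{d,m}$ are $\delta_{m-1}\delta_m$ and $F_{d,m}F_{d,m-1}$, $F_{d,m-1}F_{d,m}$ agree except in their final two letters (a one-line induction on $m$ from $F_{d,m}=F_{d,m-1}^dF_{d,m-2}$, admittedly not stated in the paper), one gets $E_{d,m,i}=F_{d,m}^{i+2}F_{d,m-1}\delta_{m-1}^{-1}\delta_m^{-1}$, which is visibly a prefix of $F_{d,m+1}=F_{d,m}^dF_{d,m-1}$ when $i\leq d-2$ and a prefix of $F_{d,m+1}^2\triangleleft F_{d,m+2}$ (here $d\geq2$ is used) when $i=d-1$. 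Once $E_{d,m,i}$ is a prefix of $F_{d,\infty}$, the conclusion $G_0(E_{d,m,i})=\varepsilon$ is immediate, exactly as you say; I see no gap beyond the bookkeeping you have already identified.
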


\begin{Theorem}[First two distinct gaps of envelope word $E_{d,m,i}$]\

Let $G_A(E_{d,m,i})=G_1(K_{d,m,i})$ and $B=\min\{p:G_p(E_{d,m,i})\neq G_1(E_{d,m,i})\}$,
then
\begin{equation*}
(1)~G_A(E_{d,m,i})=\begin{cases}
\varepsilon~(if~m=0)~\text{or}~E_{d,m-1,d-1}^{-1}~(if~m\geq1),&i=0;\\
E_{d,m,i-1}^{-1},&1\leq i\leq d-2;\\
b~(if~m=0)~\text{or}~E_{d,m-1,d-2}^{-1}~(if~m\geq1),&i=d-1.
\end{cases}
\end{equation*}
\begin{equation*}
(2)~G_B(E_{d,m,i})=\begin{cases}
b~(if~m=0)~\text{or}~E_{d,m-1,d-2}^{-1}~(if~m\geq1),&0\leq i\leq d-2;\\
E_{d,m,d-2}^{-1},&i=d-1.
\end{cases}
\end{equation*}
\begin{equation*}
(3)~B=\begin{cases}
d-i,&0\leq i\leq d-2;\\
d+1,&i=d-1.
\end{cases}
\end{equation*}
\end{Theorem}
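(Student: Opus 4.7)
The plan is to parallel the argument of Theorem~3.4, exploiting the relation $E_{d,m,i}=\mu_1\ast K_{d,m,i}\ast\mu_2$ from Proposition~2.12. A short calculation using $K_{d,m+1,0}=\delta_{m+1}F_{d,m}\delta_m^{-1}$ gives $\mu_1=F_{d,m}\delta_m^{-1}$ and $\mu_2=K_{d,m+1,0}\delta_{m+1}^{-1}$, both of length $f_{d,m}-1$. Hence every occurrence of $E_{d,m,i}$ at position $p$ is in bijection with an occurrence of $K_{d,m,i}$ at position $p+f_{d,m}-1$ that is flanked on the left by $\mu_1$ and on the right by $\mu_2$. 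Theorem~4.1 already anchors $L(E_{d,m,i},1)=1$.

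The first step is to locate the successive occurrences. For $1\leq i\leq d-2$, the proof of Theorem~3.4 supplies $L(K_{d,m,i},p)=p\ast f_{d,m}$ for $p=1,\dots,d-i$ and $L(K_{d,m,i},d-i+1)=f_{d,m+1}+f_{d,m}$. Using the decomposition $F_{d,\infty}=F_{d,m}^d F_{d,m-1}F_{d,m}^d F_{d,m-1}\cdots$, one checks that each of these kernel positions carries the correct left context $\mu_1$ and right context $\mu_2$; subtracting $f_{d,m}-1$ then produces $L(E_{d,m,i},p)=(p-1)f_{d,m}+1$ for $p=1,\dots,d-i$ and $L(E_{d,m,i},d-i+1)=f_{d,m+1}+1$. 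The cases $i=0$ and $i=d-1$ are analogous; for $i=d-1$ the first $d$ kernel occurrences are separated (with kernel gap $K_{d,m+1,0}$), so the envelope shift is $f_{d,m+1}$ and the index $B$ becomes $d+1$.

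The two distinct gaps follow directly from the positions. For $1\leq i\leq d-2$, $E_{d,m,i,1}$ ends at position $(i+2)f_{d,m}+f_{d,m-1}-2$ while $E_{d,m,i,2}$ starts at $f_{d,m}+1$; the overlap has length $(i+1)f_{d,m}+f_{d,m-1}-2=|E_{d,m,i-1}|$, and a direct comparison using the prefix relation $F_{d,m-1}\triangleleft F_{d,m}$ identifies it as $E_{d,m,i-1}$, yielding $G_A(E_{d,m,i})=E_{d,m,i-1}^{-1}$. The $(d-i+1)$-st occurrence at $f_{d,m+1}+1$ produces an overlap of length $f_{d,m}-2=|E_{d,m-1,d-2}|$ (or the single letter $b$ when $m=0$), giving the formula for $G_B$. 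The boundary cases $i=0$ (where $G_A=E_{d,m-1,d-1}^{-1}$ for $m\geq 1$, and $G_A=\varepsilon$ for $m=0$) and $i=d-1$ (where $G_A=E_{d,m-1,d-2}^{-1}$, $G_B=E_{d,m,d-2}^{-1}$) follow by the same length-and-content bookkeeping, with the $m=0$ subcases handled by direct inspection since then $\mu_1=\mu_2=\varepsilon$ and $E_{d,0,i}=a^{i+1}$.

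The main obstacle will be the exclusion step: confirming that no occurrence of $E_{d,m,i}$ lies strictly between the positions listed above. Because $E_{d,m,i}$ begins with $F_{d,m}^{i+1}$, several shifts by multiples of $f_{d,m}$ initially look plausible. The exclusion argument will combine Lemmas~3.1 and~3.2 applied to the embedded kernel block $K_{d,m,i}$ with the mismatch $\delta_{m+1}\neq\delta_m$ at the boundary of $\mu_1$ or $\mu_2$, the same tool used repeatedly in Section~3; any candidate position outside the listed set is then forced to violate one of these constraints, completing the proof.
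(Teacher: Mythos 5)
Your proposal is correct in substance, but it takes a genuinely different route from the paper, which in fact writes down no proof here at all: Section 4 simply declares that the results follow ``by similar arguments as in Section 3,'' i.e.\ by redoing the position-and-exclusion analysis of Lemmas 3.1--3.2 and Figures 3.1--3.4 directly for $E_{d,m,i}$. You instead reduce to the already-proved kernel case through the fixed factorization $E_{d,m,i}=\mu_1\ast K_{d,m,i}\ast\mu_2$ of Proposition 2.10 (you cite it as 2.12; likewise your ``Theorem 3.4'' is Theorem 3.5), with $|\mu_1|=|\mu_2|=f_{d,m}-1$, so that occurrences of $E_{d,m,i}$ are exactly the properly flanked occurrences of $K_{d,m,i}$ shifted left by $f_{d,m}-1$. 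This reduction buys you more than you claim: the ``exclusion step'' you flag as the main remaining obstacle is actually automatic, since every occurrence of $E_{d,m,i}$ forces an occurrence of $K_{d,m,i}$ at a fixed offset and the proof of Theorem 3.5 has already enumerated all kernel occurrences in the relevant initial segment (positions $p\ast f_{d,m}$ for $p=1,\dots,d-i$ and $f_{d,m+1}+f_{d,m}$); no fresh appeal to Lemmas 3.1--3.2 is needed. The one genuine obligation is the converse check that each listed kernel occurrence really carries $\mu_1=F_{d,m}\delta_m^{-1}$ on its left and $\mu_2$ on its right, which you assert with ``one checks''; it does follow from $F_{d,\infty}$ beginning with $F_{d,m+2}=F_{d,m+1}^{d}F_{d,m}$, but you should argue it directly rather than lean on Theorem 5.4, whose proof sits downstream of Section 4 and would make the argument circular. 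Your length bookkeeping is right (overlap $(i+1)f_{d,m}+f_{d,m-1}-2=|E_{d,m,i-1}|$ for $G_A$, overlap $f_{d,m}-2=|E_{d,m-1,d-2}|$ at the $B$-th gap, and $\mu_1=\mu_2=\varepsilon$ when $m=0$), so the remaining work is routine verification rather than a missing idea.
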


\begin{Theorem}[Gap sequence of envelope word $E_{d,m,i}$]\

(1) The envelope word $E_{d,m,i}$ has exactly two distinct gaps $G_A(E_{d,m,i})$ and $G_B(E_{d,m,i})$;

(2) The gap sequence $\{G_p(E_{d,m,i})\}_{p\geq1}$ is sequence $\mathcal{S}$ on $\{G_A(E_{d,m,i}),G_B(E_{d,m,i})\}$, where
\begin{equation*}
\mathcal{S}=\begin{cases}
\sigma_{d-i-1}(F_{d,\infty}),&0\leq i\leq d-2;\\
F_{d,\infty},&i=d-1.
\end{cases}
\end{equation*}
\end{Theorem}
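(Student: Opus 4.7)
My approach is to reduce Theorem 4.3 to the already-proved Theorem 3.5 via Proposition 2.12, which gives $E_{d,m,i} = \mu_1 \ast K_{d,m,i} \ast \mu_2$ for words $\mu_1, \mu_2$ depending only on $d$ and $m$. The strategy has two stages: first establish a positional bijection between occurrences of $E_{d,m,i}$ and $K_{d,m,i}$ in $F_{d,\infty}$ (by shift of $|\mu_1|$), then transfer the gap analysis. Under the bijection, consecutive-occurrence distances are identical for $E$ and $K$, so the gap \emph{label} at each index $p$ agrees for the two factors. Hence the gap sequence of $E_{d,m,i}$, viewed over $\{G_A(E),G_B(E)\}$, coincides with the gap sequence of $K_{d,m,i}$ over $\{G_A(K),G_B(K)\}$; by Theorem 3.5 this is $\sigma_{d-i-1}(F_{d,\infty})$ for $0 \le i \le d-2$ and $F_{d,\infty}$ for $i = d-1$, which is part (2). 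Part (1) follows since exactly two distinct gap classes transfer from $K$ to $E$, and the explicit forms in Theorem 4.2 can be checked from the formal identity $G_p(E_{d,m,i}) = \mu_2^{-1} \ast G_p(K_{d,m,i}) \ast \mu_1^{-1}$ after free-monoid-with-inverses cancellation.

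To verify the bijection I would first handle the initial occurrence, then propagate. By Theorem 3.3 the first $K_{d,m,i}$ sits at position $f_{d,m}$, so the preceding $|\mu_1| = f_{d,m}-1$ letters of $F_{d,\infty}$ are $F_{d,m}\delta_m^{-1}$; on the other hand, $\mu_1 = \delta_{m+1}^{-1} K_{d,m+1,0} = \delta_{m+1}^{-1}\delta_{m+1}F_{d,m}\delta_m^{-1} = F_{d,m}\delta_m^{-1}$ by direct computation, so $\mu_1$ does flank on the left. The symmetric right-flanking by $\mu_2$ follows from $F_{d,m+1} = F_{d,m}^d F_{d,m-1}$ and palindromicity of kernels (Property 2.11). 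For the remaining occurrences I would use the self-similarity $F_{d,\infty} = \sigma_d(F_{d,\infty})$ and induct on $m$: the substitution $\sigma_d$ transports flanked occurrences at level $m$ into configurations at level $m+1$ whose surrounding context is again of the form $\mu_1', K_{d,m+1,?}, \mu_2'$. An alternative that avoids any forward reference is to mirror Section 3 literally: prove envelope analogs of Lemmas 3.1--3.2 (positions of $E_{d,m,i}$ inside $F_{d,m}F_{d,m}$ and $F_{d,m}F_{d,m-1}F_{d,m}$) to obtain Theorems 4.1 and 4.2, then replay the substitution computation of Theorem 3.5 with the block $[E_{d,m,i}G_A(E)]^{d-i-1}[E_{d,m,i}G_B(E)]$ in place of its kernel counterpart, verifying that $\sigma_d$ sends this block to the analogous level-$(m+1)$ block modulo a shift $\eta$.

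The main obstacle is justifying the flanking property at an arbitrary occurrence, not merely the first. Geometrically it is transparent from the scale-$m$ block decomposition of $F_{d,\infty}$ into alternating $F_{d,m}$- and $F_{d,m-1}$-blocks, but the cleanest formal argument uses uniqueness of kernel decomposition (Theorem 6.2), creating a mild forward reference that must be either inlined or explicitly flagged. A secondary, purely bookkeeping difficulty is that envelope gaps are typically overlapping (and hence represented as inverse words), so the identity $G_p(E) = \mu_2^{-1}G_p(K)\mu_1^{-1}$ must be read with the free-inverse conventions of Section 1.1; moreover the three cases $i=0$, $1 \le i \le d-2$, $i=d-1$, together with the edge case $m=0$ where $E_{d,-1,\cdot}$ is replaced by the letter $b$, each require separate matching against the explicit formulas in Theorem 4.2.
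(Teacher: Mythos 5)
Your reduction is a genuinely different route from the paper's. The paper proves Theorem 4.3 by repeating the whole of Section 3 with $E_{d,m,i}$ in place of $K_{d,m,i}$ --- envelope analogues of Lemmas 3.1--3.2, then Theorems 4.1 and 4.2, then the substitution computation with the block $[E_{d,m,i}G_A]^{d-i-1}[E_{d,m,i}G_B]$ and a shift $\eta$ --- and only afterwards, in Section 5, establishes the occurrence-level correspondence $E_{d,m,i,p}=\mu_1\ast K_{d,m,i,p}\ast\mu_2$ by comparing the two position formulas (Propositions 5.1, 5.2, Corollary 5.3, Theorem 5.4). You invert this order: prove the correspondence first and let the gap pattern transfer via $G_p(E_{d,m,i})=\mu_2^{-1}\ast G_p(K_{d,m,i})\ast\mu_1^{-1}$, which is exactly the paper's Proposition 6.7 specialized to $\omega=E_{d,m,i}$. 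Your way buys a much shorter Section 4 (no second substitution computation) and turns Theorem 5.4 into a lemma rather than a corollary; the paper's way keeps Sections 3 and 4 independent and symmetric. Your fallback option (mirror Section 3 literally) is precisely what the paper does.

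The one point that needs real care is the flanking lemma, which you rightly call the main obstacle but whose suggested shortcuts do not quite deliver it. Proposition 2.10 is an identity between the abstract words $E_{d,m,i}$ and $K_{d,m,i}$ and says nothing about occurrences in $F_{d,\infty}$. Theorem 6.2 gives uniqueness of the kernel \emph{inside a factor} $\omega$ with $Ker(\omega)=K_{d,m,i}$, not that every occurrence of $K_{d,m,i}$ in $F_{d,\infty}$ is preceded by $\mu_1$ and followed by $\mu_2$; and the statements that do assert the flanking (Theorem 5.4, Proposition 6.3) sit downstream of Theorem 4.3 in the paper's logic, so importing them would be circular. You must therefore actually carry out an independent argument: the cleanest is to read the flanking off the explicit occurrence positions already determined in the proofs of Theorems 3.5--3.6 (each $K_{d,m,i,p}$ sits at a known offset in the $F_{d,m}$-block decomposition of $F_{d,\infty}$, which forces the $f_{d,m}-1$ letters on either side), or to complete your $\sigma_d$-induction on $m$. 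With that lemma in hand the rest of your transfer --- order-preserving bijection by a constant shift, identical $A/B$ labelling at each index $p$, distinctness of $G_A(E_{d,m,i})$ and $G_B(E_{d,m,i})$ inherited from $G_A(K_{d,m,i})\neq G_B(K_{d,m,i})$ --- is sound.
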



\vspace{0.5cm}

\stepcounter{section}

\noindent\textbf{\large{5.~The Relation Between $K_{d,m,i}$ and $E_{d,m,i}$}}

\vspace{0.4cm}

In Section 3 and 4, we determine the gaps and gap sequence of kernel word $K_{d,m,i}$ and envelope word $E_{d,m,i}$. Using these properties, we can give the relation between them.



\begin{proposition}[Position of $K_{d,m,i,p}$] The expression of position of the $(p+1)$-th occurrence of kernel word $K_{d,m,i}$, denote by $L(K_{d,m,i},p+1)$, is shown as below:
\begin{equation*}
\begin{cases}
(p+1)\ast f_{d,m}+|\sigma_{d-1}(F_{d,\infty})[1,p]|_b\ast f_{d,m-1},&i=0;\\
(p+1)\ast f_{d,m}+|\sigma_{d-i-1}(F_{d,\infty})[1,p]|_b\ast[i\ast f_{d,m}+f_{d,m-1}],&1\leq i\leq d-2;\\
(p+1)\ast f_{d,m}+|\sigma_{d+1}(F_{d,\infty})[1,p]|_a\ast[(d-1)\ast f_{d,m}+f_{d,m-1}],&i=d-1.
\end{cases}
\end{equation*}
\end{proposition}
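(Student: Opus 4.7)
The plan is to anchor at $L(K_{d,m,i},1)=f_{d,m}$ (given by Theorem 3.3) and sum the successive shifts induced by the gap sequence of Theorem 3.6. Concretely, for each $p\geq 1$, the shift $L(K_{d,m,i},p+1)-L(K_{d,m,i},p)$ equals the length of the reduced product $K_{d,m,i}\,G_p(K_{d,m,i})$ in the free group generated by $\{a,b\}$, so that an overlapped (inverse) gap contracts the shift by exactly the overlap length while an adjacent/separated gap contributes its positive length. Telescoping then gives
$$L(K_{d,m,i},p+1)=f_{d,m}+a_p\,s_A+b_p\,s_B,$$
where $s_A:=|K_{d,m,i}\,G_A(K_{d,m,i})|$, $s_B:=|K_{d,m,i}\,G_B(K_{d,m,i})|$, and $a_p,b_p$ denote the numbers of $G_A$- and $G_B$-entries in the first $p$ terms of the gap sequence.

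First I would compute $s_A$ and $s_B$ in each of the three cases by substituting the explicit forms of $G_A,G_B$ from Theorem 3.5 and reducing. For $1\leq i\leq d-2$, the identity $K_{d,m,i}\,G_A=\delta_mF_{d,m}\delta_m^{-1}$ (already isolated in Step 2 of the proof of Theorem 3.6) yields $s_A=f_{d,m}$, while $K_{d,m,i}\,G_B=\delta_mF_{d,m}^iF_{d,m-1}F_{d,m}\delta_m^{-1}$ gives $s_B=(i+1)f_{d,m}+f_{d,m-1}$. The case $i=0$ is analogous, with $G_A=K_{d,m-1,d-1}$ a genuine positive word, producing $s_A=f_{d,m}$ and $s_B=f_{d,m}+f_{d,m-1}$. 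For $i=d-1$ the key observation is that $K_{d,m,d-2}$ is a suffix of $K_{d,m,d-1}$, so that $K_{d,m,d-1}\,K_{d,m,d-2}^{-1}=\delta_mF_{d,m}\delta_m^{-1}$; this yields $s_A=f_{d,m+1}$ and $s_B=f_{d,m}$, with the roles of long and short shifts reversed compared to the other two cases.

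Next, by Theorem 3.6, for $0\leq i\leq d-2$ the pair $(a_p,b_p)$ is precisely $(|\sigma_{d-i-1}(F_{d,\infty})[1,p]|_a,\,|\sigma_{d-i-1}(F_{d,\infty})[1,p]|_b)$, and for $i=d-1$ it is the corresponding count in $F_{d,\infty}$. In each case I would eliminate $a_p$ via $a_p+b_p=p$ and apply the recurrence $f_{d,m+1}=d\,f_{d,m}+f_{d,m-1}$ to collect terms. For instance, for $1\leq i\leq d-2$,
$$f_{d,m}+a_p\,f_{d,m}+b_p[(i+1)f_{d,m}+f_{d,m-1}]=(p+1)f_{d,m}+b_p[i\,f_{d,m}+f_{d,m-1}],$$
exactly matching the stated formula; the cases $i=0$ and $i=d-1$ reduce by the same maneuver.

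The main obstacle is bookkeeping: keeping signed lengths consistent when a gap is an inverse word, and keeping the correspondence between the abstract letters $\{a,b\}$ of the substitution sequence $\sigma_{d-i-1}(F_{d,\infty})$ and the concrete gaps $\{G_A,G_B\}$ straight in each case. The $i=d-1$ case is the most delicate one, because the long shift is $s_A$ rather than $s_B$, so the formula must count \emph{$a$'s} (rather than $b$'s) in the underlying symbolic sequence, which is exactly the asymmetry reflected in the third line of the proposition.
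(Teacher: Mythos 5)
Your proposal is correct and follows essentially the same route as the paper: the paper also telescopes from the first occurrence (writing $L(K_{d,m,i},p+1)=|G_0(K_{d,m,i})|+p\,|K_{d,m,i}|+\sum_{j=1}^{p}|G_j(K_{d,m,i})|+1$, which is the same quantity as your $f_{d,m}+a_p s_A+b_p s_B$ since $|G_0|+1=f_{d,m}$), counts the $A$- and $B$-gaps among the first $p$ via Theorem 3.6, and simplifies with $a_p+b_p=p$ and the recurrence $f_{d,m+1}=d\,f_{d,m}+f_{d,m-1}$. Your signed-length convention for overlapped gaps and the case-by-case values of $s_A,s_B$ (including the reversal for $i=d-1$) agree with the paper's computations.
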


\begin{proof} The proofs in the three cases are similar. Take $i=0$ for example.

(1) When $m=0$, by definition of kernel word, $K_{d,0,0}=a$. By Theorem 3.5, $$G_A(K_{d,0,0})=\varepsilon~\text{and}~G_B(K_{d,0,0})=K_{d,m+1,0}=b.$$
By Theorem 3.6, the gap sequence $\{G_p(K_{d,0,0})\}_{p\geq1}$ is sequence $\sigma_{d-1}(F_{d,\infty})$ on $\{G_A,G_B\}$. So
\vspace{-0.4cm}
\begin{equation*}
\begin{split}
&L(K_{d,0,0},p+1)=|G_0(K_{d,0,0})|+p\ast|K_{d,0,0}|+\sum_1^p|G_p(K_{d,0,0})|+1\\[-6pt]
=&|G_0(K_{d,0,0})|+p\ast|K_{d,0,0}|\\
&+|\sigma_{d-1}(F_{d,\infty})[1,p]|_a\ast|G_A(K_{d,0,0})|+|\sigma_{d-1}(F_{d,\infty})[1,p]|_b\ast|G_B(K_{d,0,0})|+1\\
=&f_{d,m}-1+p+|\sigma_{d-1}(F_{d,\infty})[1,p]|_a\ast0+|\sigma_{d-1}(F_{d,\infty})[1,p]|_b\ast1+1\\
=&f_{d,m}+p+|\sigma_{d-1}(F_{d,\infty})[1,p]|_b
\end{split}
\end{equation*}

(2) When $m\geq1$, by definition of kernel word, $K_{d,m,0}=\delta_mF_{d,m-1}\delta_{m-1}^{-1}$. By Theorem 3.5, $$G_A(K_{d,m,0})=K_{d,m-1,d-1}=\delta_{m-1}F_{d,m-1}^{d-1}F_{d,m-2}\delta_{m-2}^{-1},~G_B(K_{d,m,0})=K_{d,m+1,0}=\delta_{m+1}F_{d,m}\delta_{m}^{-1}.$$
By Theorem 3.6, the gap sequence $\{G_p(K_{d,m,0})\}_{p\geq1}$ is sequence $\sigma_{d-1}(F_{d,\infty})$ on $\{G_A,G_B\}$. So
\vspace{-0.4cm}
\begin{equation*}
\begin{split}
&L(K_{d,m,0},p+1)=|G_0(K_{d,m,0})|+p\ast|K_{d,m,0}|+\sum_1^p|G_p(K_{d,m,0})|+1\\[-6pt]
=&|G_0(K_{d,m,0})|+p\ast|K_{d,m,0}|\\
&+|\sigma_{d-1}(F_{d,\infty})[1,p]|_a\ast|G_A(K_{d,m,0})|+|\sigma_{d-1}(F_{d,\infty})[1,p]|_b\ast|G_B(K_{d,m,0})|+1\\
=&f_{d,m}-1+p\ast f_{d,m-1}\\
&+|\sigma_{d-1}(F_{d,\infty})[1,p]|_a\ast[(d-1)\ast f_{d,m-1}+f_{d,m-2}]
+|\sigma_{d-1}(F_{d,\infty})[1,p]|_b\ast f_{d,m}+1\\
=&(p+1)\ast f_{d,m}+|\sigma_{d-1}(F_{d,\infty})[1,p]_b\ast f_{d,m-1}
\end{split}
\end{equation*}

(3) When $m=0$, $f_{d,m}=f_{d,m-1}=1$, then
$$f_{d,m}+p+|\sigma_{d-1}(F_{d,\infty})[1,p]|_b=(p+1)\ast f_{d,m}+|\sigma_{d-1}(F_{d,\infty})[1,p]|_b\ast f_{d,m-1}.$$
So the two expressions are equivalent in this case.
\end{proof}

\begin{proposition}[position of $E_{d,m,i,p}$] The expression of position of the $(p+1)$-th occurrence of envelope word $E_{d,m,i}$, denoted by $L(E_{d,m,i},p+1)$, is shown as below:
\begin{equation*}
\begin{cases}
p\ast f_{d,m}+|\sigma_{d-1}(F_{d,\infty})[1,p]_b\ast f_{d,m-1}+1,&i=0;\\
p\ast f_{d,m}+|\sigma_{d-i-1}(F_{d,\infty})[1,p]|_b\ast[i\ast f_{d,m}+f_{d,m-1}]+1,&1\leq i\leq d-2;\\
p\ast f_{d,m}+|\sigma_{d+1}(F_{d,\infty})[1,p]|_a\ast[(d-1)\ast f_{d,m}+f_{d,m-1}]+1,&i=d-1.
\end{cases}
\end{equation*}
\end{proposition}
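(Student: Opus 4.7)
The strategy is to mirror the proof of Proposition 5.1 verbatim, with the inputs of Section 3 replaced by the corresponding inputs of Section 4. The counting identity is the same:
\[
L(E_{d,m,i},p+1) \;=\; |G_0(E_{d,m,i})| + p\cdot |E_{d,m,i}| + \sum_{j=1}^{p}|G_j(E_{d,m,i})| + 1,
\]
so the argument reduces to plugging in lengths and simplifying. By Theorem 4.1 the prefix $G_0(E_{d,m,i})$ is empty, which is why the ``$f_{d,m}-1$'' boundary term appearing in Proposition 5.1 is absent here (and why the $+1$ at the end survives unabsorbed). This already explains why the formula for envelopes differs from the kernel formula by exactly a shift in the coefficient of $f_{d,m}$, namely $(p+1)\ast f_{d,m} \mapsto p\ast f_{d,m}+1$.

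The plan is: first split into the three cases $i=0$, $1\leq i\leq d-2$, and $i=d-1$, exactly as in Proposition 5.1. In each case I would read off the two gaps $G_A(E_{d,m,i}), G_B(E_{d,m,i})$ from Theorem 4.2 and their lengths from the definition of envelope word, then apply Theorem 4.3 to rewrite
\[
\sum_{j=1}^{p}|G_j(E_{d,m,i})| \;=\; |\mathcal{S}[1,p]|_a\cdot|G_A(E_{d,m,i})| + |\mathcal{S}[1,p]|_b\cdot|G_B(E_{d,m,i})|,
\]
where $\mathcal{S}=\sigma_{d-i-1}(F_{d,\infty})$ for $0\leq i\leq d-2$ and $\mathcal{S}=F_{d,\infty}$ for $i=d-1$. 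Finally I would use $|\mathcal{S}[1,p]|_a + |\mathcal{S}[1,p]|_b = p$ to eliminate the $|\mathcal{S}[1,p]|_a$ term in favour of the $|\mathcal{S}[1,p]|_b$ term, which is the reason only the latter appears in the first two cases (and symmetrically only $|\mathcal{S}[1,p]|_a$ appears in the case $i=d-1$, since the roles of $G_A$ and $G_B$ are swapped there).

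The lengths needed are $|E_{d,m,i}| = (i+2)f_{d,m}+f_{d,m-1}-2$, and (for $m\geq1$) $|G_A(E_{d,m,i})|=|E_{d,m,i-1}|$ for $1\leq i\leq d-2$, $|G_A(E_{d,m,0})|=|E_{d,m-1,d-1}|$, and $|G_B(E_{d,m,i})|=|E_{d,m-1,d-2}|$ for $0\leq i\leq d-2$; the case $i=d-1$ uses $|G_B|=|E_{d,m,d-2}|$. Combined with the Fibonacci-type recursion $f_{d,m+1}=d\cdot f_{d,m}+f_{d,m-1}$ these collapse into the stated closed forms. As in Proposition 5.1, the boundary case $m=0$ must be checked separately and shown to coincide with the general formula using $f_{d,0}=f_{d,-1}=1$.

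The main obstacle I expect is purely algebraic bookkeeping: the envelope lengths are longer and messier than kernel lengths, so making $p\cdot|E_{d,m,i}| + |\mathcal{S}[1,p]|_a|G_A| + |\mathcal{S}[1,p]|_b|G_B|$ telescope cleanly into $p\ast f_{d,m} + |\mathcal{S}[1,p]|_b\ast[i\ast f_{d,m}+f_{d,m-1}]$ (plus the $+1$) requires careful use of $|G_A|+|E|=|E|+|G_A|$ relations of the form $|E_{d,m,i-1}|+|E_{d,m,i}|=\ldots$ together with the recursion for $f_{d,m}$. No new structural input beyond Theorems 4.1--4.3 is needed; the content is entirely the same as the kernel calculation of Proposition 5.1, translated through the identity $E_{d,m,i}=\mu_1 K_{d,m,i}\mu_2$ of Proposition 2.11.
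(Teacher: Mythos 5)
Your approach is exactly the paper's: the paper omits a proof of this proposition, but it is the verbatim analogue of the proof given for Proposition 5.1, using the same counting identity $L(E_{d,m,i},p+1)=|G_0|+p|E_{d,m,i}|+\sum_{j=1}^{p}|G_j|+1$ with Theorems 4.1--4.3 in place of 3.3, 3.5 and 3.6, and your length formulas together with the elimination of $|\mathcal{S}[1,p]|_a$ via $|\mathcal{S}[1,p]|_a+|\mathcal{S}[1,p]|_b=p$ do collapse to the stated expressions. One caveat: for $m\geq1$ every gap of $E_{d,m,i}$ is an inverse (overlap) word, so in the counting identity the gap lengths must be taken with negative sign, e.g. $|G_A(E_{d,m,i})|=-|E_{d,m,i-1}|$ and $|G_B(E_{d,m,i})|=-|E_{d,m-1,d-2}|$; with the positive signs as you wrote them the telescoping would not produce the stated closed forms.
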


\begin{corollary} $L(K_{d,m,i},p+1)-L(E_{d,m,i},p+1)=f_{d,m}-1$. \end{corollary}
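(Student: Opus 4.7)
The plan is to treat this as an immediate consequence of the two preceding propositions. Propositions 5.1 and 5.2 give closed-form expressions for $L(K_{d,m,i},p+1)$ and $L(E_{d,m,i},p+1)$ split into the same three cases ($i=0$, $1\le i\le d-2$, $i=d-1$). In every case, the two expressions differ only in the ``leading'' term: Proposition 5.1 contributes $(p+1)\ast f_{d,m}$, while Proposition 5.2 contributes $p\ast f_{d,m}+1$. The weighted-count terms $|\sigma_{d-i-1}(F_{d,\infty})[1,p]|_b\ast[\cdot]$ (or $|\sigma_{d+1}(F_{d,\infty})[1,p]|_a\ast[\cdot]$ in the last case) are literally identical in $K$- and $E$-formulas, so they cancel. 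Subtracting case-by-case yields $f_{d,m}-1$, which proves the corollary.

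A more conceptual way to see the same identity, which I would include as a remark, comes from Proposition 2.9: $E_{d,m,i}=\mu_{1}\ast K_{d,m,i}\ast \mu_{2}$ with $\mu_{1}=\delta_{m+1}^{-1}K_{d,m+1,0}=F_{d,m}\delta_m^{-1}$, so $|\mu_1|=f_{d,m}-1$. Hence every occurrence of $E_{d,m,i}$ contains an occurrence of $K_{d,m,i}$ at an offset of exactly $f_{d,m}-1$ from its starting position, and the statement reduces to showing that the $(p+1)$-th occurrence of $E_{d,m,i}$ contains the $(p+1)$-th occurrence of $K_{d,m,i}$.

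The only non-routine point is this order-preserving correspondence between occurrences of $E_{d,m,i}$ and $K_{d,m,i}$. This is the main obstacle and is really where content lies; however, since Propositions 5.1 and 5.2 are assumed, I would not invoke this bijection directly but would rely on the arithmetic subtraction above. The bijection itself can then be read off a posteriori from the equality $L(K_{d,m,i},p+1)-L(E_{d,m,i},p+1)=|\mu_1|$, and it is consistent with the ``uniqueness of kernel decomposition'' to be established in Section 6.

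In summary, the proof I would write consists of three lines of subtraction, one per case, followed by a short remark that the constant $f_{d,m}-1$ equals $|\mu_1|$, giving the geometric interpretation that the kernel inside each envelope is offset by $|\mu_1|$ letters from the envelope's start.
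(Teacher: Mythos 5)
Your proposal is correct and matches the paper's (implicit) argument: the corollary is stated without proof precisely because it follows by the case-by-case subtraction of the formulas in Propositions 5.1 and 5.2, exactly as you carry out. Your added remark identifying the constant $f_{d,m}-1$ with $|\mu_1|=|\delta_{m+1}^{-1}K_{d,m+1,0}|$ is consistent with Theorem 5.4 and is a reasonable, correct aside.
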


\begin{Theorem} The relation between $K_{d,m,i,p}$ and $E_{d,m,i,p}$ is
$$E_{d,m,i,p}=\mu_1\ast K_{d,m,i,p}\ast\mu_2,$$
where $\mu_1=\delta_{m+1}^{-1}K_{d,m+1,0}$ and $\mu_2=K_{d,m+1,0}\delta_{m+1}^{-1}$ are constant words depending only on $d$, $m$.
\end{Theorem}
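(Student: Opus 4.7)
The plan is to promote the word-level identity of Proposition 2.9 to an occurrence-level identity indexed by $p$. Proposition 2.9 already gives the free-monoid equation $E_{d,m,i} = \mu_1 \ast K_{d,m,i} \ast \mu_2$, so the internal structure of each single copy of an envelope word is settled. What still must be verified is that the particular copy of $K_{d,m,i}$ sitting inside the $p$-th occurrence $E_{d,m,i,p}$ is exactly the $p$-th global occurrence $K_{d,m,i,p}$ in $F_{d,\infty}$, rather than some other enumerated occurrence.

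First, I would compute the length of $\mu_1$. Since $K_{d,m+1,0} = \delta_{m+1} F_{d,m} \delta_m^{-1}$ has length $f_{d,m}$, the definition $\mu_1 = \delta_{m+1}^{-1} K_{d,m+1,0}$ yields $|\mu_1| = f_{d,m} - 1$. By Proposition 2.9, the factor $K_{d,m,i}$ appears inside $E_{d,m,i}$ at internal offset $|\mu_1|$, i.e.\ the letters of $K_{d,m,i}$ in any occurrence of $E_{d,m,i}$ occupy positions $L(E_{d,m,i},p)+|\mu_1|$ through $L(E_{d,m,i},p)+|\mu_1|+|K_{d,m,i}|-1$ in $F_{d,\infty}$.

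Next I would invoke Corollary 5.3, which gives
$$L(K_{d,m,i},p+1) - L(E_{d,m,i},p+1) = f_{d,m}-1 = |\mu_1|$$
for every $p \geq 0$. Therefore the starting position of the $p$-th globally enumerated kernel occurrence is exactly the starting position of the kernel copy living inside the $p$-th envelope occurrence, so these two occurrences coincide as factors of $F_{d,\infty}$. Combined with the word identity of Proposition 2.9, this yields the occurrence-level identity $E_{d,m,i,p} = \mu_1 \ast K_{d,m,i,p} \ast \mu_2$ claimed in the theorem.

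The main delicate point is bookkeeping: the kernel word may well occur more than once inside a single envelope occurrence in overlap situations, so one cannot simply argue by "uniqueness of $K$ inside $E$". Instead the bijection between the enumerations of $\{K_{d,m,i,p}\}$ and $\{E_{d,m,i,p}\}$ is forced by the matching position formulas of Propositions 5.1 and 5.2, whose difference is identically $|\mu_1|$ across all $p$; this constant shift guarantees that the natural map "take the kernel copy sitting at offset $|\mu_1|$ inside the $p$-th envelope" is precisely the index-preserving map $p \mapsto p$. Everything else is routine verification.
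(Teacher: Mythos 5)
Your argument is correct and matches the route the paper intends: Theorem 5.4 is stated immediately after Corollary 5.3 precisely because the word-level identity of Proposition 2.10 together with the constant position offset $L(K_{d,m,i},p+1)-L(E_{d,m,i},p+1)=f_{d,m}-1=|\mu_1|$ from Propositions 5.1--5.2 is the whole proof. The only blemish is a citation slip (the word-level relation is Proposition 2.10, not 2.9), and your caution about multiple kernel copies inside one envelope occurrence is unnecessary but harmless, since your position-matching argument sidesteps it without invoking the Section 6 uniqueness results.
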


\noindent\textbf{Remark.} Theorem 5.4 is stronger than Proposition 2.10.


\vspace{0.5cm}

\stepcounter{section}

\noindent\textbf{\large{6.~Gaps and Gap Sequence of Arbitrary Words}}

\vspace{0.4cm}

In this section, we give two versions of "uniqueness of kernel decomposition property": weak and strong, these properties make "kernel word" so important and so special. Using them, we can extend the properties about gaps and gap sequence from kernel and envelope word to arbitrary words in sequence $F_{d,m}$ for $d\geq2$.

\vspace{0.4cm}

\noindent\emph{6.1~Uniqueness of Kernel Decomposition Property}

\begin{lemma}\

(1) $K_{d,m,i}G_A(K_{d,m,i})K_{d,m,i}=K_{d,m,i+1}$ when $0\leq i\leq d-2$;

(2) $G_B(K_{d,m,i})=K_{d,m+1,0}$ when $0\leq i\leq d-2$;

(3) $G_A(K_{d,m,d-1})=K_{d,m+1,0}$ when $i=d-1$;

(4) $K_{d,m,d-1}G_B(K_{d,m,d-1})K_{d,m,d-1}=K_{d,m+2,0}$ when $i=d-1$.
\end{lemma}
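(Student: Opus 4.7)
The plan is to treat all four identities as formal manipulations in the free monoid on $\mathcal{A}$ (allowing inverses), using only the definition $K_{d,m,i}=\delta_m F_{d,m}^i F_{d,m-1}\delta_{m-1}^{-1}$, the recursion $F_{d,m}=F_{d,m-1}^d F_{d,m-2}$, the parity property $\delta_m=\delta_{m-2}$ (Property 2.2), and the explicit gap formulas from Theorem 3.5. Parts (2) and (3) require essentially no work: (2) is literally the first line of Theorem 3.5(2), and (3) is the third line of Theorem 3.5(1). So the content lies in (1) and (4), both of which are purely computational after substituting in the expressions for $G_A$ and $G_B$.

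For part (1) I would split on $i$. The trivial case $m=0$, $i=0$ gives $K_{d,0,0}\cdot\varepsilon\cdot K_{d,0,0}=aa=K_{d,0,1}$ directly. For $i=0$ with $m\geq 1$, substitute $G_A=K_{d,m-1,d-1}=\delta_{m-1}F_{d,m-1}^{d-1}F_{d,m-2}\delta_{m-2}^{-1}$, cancel the inner pair $\delta_{m-1}^{-1}\delta_{m-1}$, use $\delta_{m-2}^{-1}\delta_m=\varepsilon$, and collapse $F_{d,m-1}^d F_{d,m-2}=F_{d,m}$ to obtain $\delta_m F_{d,m}F_{d,m-1}\delta_{m-1}^{-1}=K_{d,m,1}$. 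For $1\leq i\leq d-2$, substitute $G_A=K_{d,m,i-1}^{-1}=\delta_{m-1}F_{d,m-1}^{-1}F_{d,m}^{-(i-1)}\delta_m^{-1}$; the middle cleanup reduces $K_{d,m,i}G_A$ to $\delta_m F_{d,m}\delta_m^{-1}$, and appending $K_{d,m,i}$ bumps the exponent on $F_{d,m}$ from $i$ to $i+1$, giving $K_{d,m,i+1}$.

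Part (4) proceeds along exactly the same lines. Substitute $G_B(K_{d,m,d-1})=K_{d,m,d-2}^{-1}$ from Theorem 3.5(2), write out $K_{d,m,d-1}=\delta_m F_{d,m}^{d-1}F_{d,m-1}\delta_{m-1}^{-1}$, cancel $F_{d,m-1}F_{d,m-1}^{-1}$ and the $\delta$'s, and collapse $F_{d,m}^{d-1}\cdot F_{d,m}^{-(d-2)}=F_{d,m}$. Re-concatenating with the second copy of $K_{d,m,d-1}$ produces $\delta_m F_{d,m}^d F_{d,m-1}\delta_{m-1}^{-1}$. Recognizing $F_{d,m}^d F_{d,m-1}=F_{d,m+1}$ and applying the parity identities $\delta_m=\delta_{m+2}$ and $\delta_{m-1}=\delta_{m+1}$ rewrites this as $\delta_{m+2}F_{d,m+1}\delta_{m+1}^{-1}=K_{d,m+2,0}$, as required.

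The main obstacle is simply bookkeeping: one must keep the formal inverses straight and check that every $\delta$-cancellation is legal at the level of actual letters (not just symbolic). Since Theorem 3.5 has already justified that the expressions $K_{d,m,i-1}^{-1}$ and $K_{d,m,d-2}^{-1}$ arise from genuine overlaps inside $F_{d,\infty}$, no further combinatorial argument is needed here; all four identities fall out from the explicit form of $K_{d,m,i}$ by direct algebraic manipulation, and no induction on $m$ is required.
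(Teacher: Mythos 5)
Your proposal is correct and follows essentially the same route as the paper: parts (2) and (3) are read off directly from Theorem 3.5, while parts (1) and (4) are obtained by substituting the explicit gap expressions into $K_{d,m,i}=\delta_m F_{d,m}^i F_{d,m-1}\delta_{m-1}^{-1}$ and cancelling, using $F_{d,m+1}=F_{d,m}^dF_{d,m-1}$ and $\delta_m=\delta_{m+2}$. The paper only writes out the subcase $1\leq i\leq d-2$ of (1) and declares the rest similar, so your treatment of the $i=0$ and part (4) computations is simply a more complete version of the same argument.
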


\begin{proof} The properties (2) and (3) follow from Theorem 3.5 directly.
The proofs of (1) and (4) are similar, we prefer to take (1) for example.

When $1\leq i\leq d-2$, by the definition of kernel word, $K_{d,m,i}=\delta_m F_{d,m}^i F_{d,m-1}\delta_{m-1}^{-1}$; by Theorem 3.5, $G_A(K_{d,m,i})=K_{d,m,i-1}^{-1}$. So
\begin{equation*}
\begin{split}
&K_{d,m,i}G_A(K_{d,m,i})K_{d,m,i}\\
=&\delta_m F_{d,m}^i F_{d,m-1}\delta_{m-1}^{-1}\ast [\delta_m F_{d,m}^{i-1} F_{d,m-1}\delta_{m-1}^{-1}]^{-1}\ast \delta_m F_{d,m}^i F_{d,m-1}\delta_{m-1}^{-1}\\
=&\delta_m F_{d,m}F_{d,m}^i F_{d,m-1}\delta_{m-1}^{-1}=\delta_m F_{d,m}^{i+1} F_{d,m-1}\delta_{m-1}^{-1}=K_{d,m,i+1}
\end{split}
\end{equation*}

When $i=0$, $G_A(K_{d,m,i})=\varepsilon$ ($m=0$) and $K_{d,m-1,d-1}$ ($m\geq1$), the proof could be obtained by a similar argument as above.
\end{proof}

\begin{Theorem}[Uniqueness of kernel decomposition, weak]\

Let $Ker(\omega)=K_{d,m,i}$, then $\omega$ has an unique kernel decomposition as
$$\omega=\mu_1(\omega)\ast K_{d,m,i}\ast\mu_2(\omega),$$
where $\mu_1(\omega)$ and $\mu_2(\omega)$ are constant words depending only on $\omega$.
\end{Theorem}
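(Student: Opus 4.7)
The plan is to show that $K_{d,m,i}$ occurs in $\omega$ exactly once; uniqueness of the decomposition $\omega=\mu_1(\omega)\ast K_{d,m,i}\ast\mu_2(\omega)$ is then immediate, with $\mu_1(\omega)$ and $\mu_2(\omega)$ the prefix and suffix of $\omega$ flanking this unique occurrence.

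I would argue by contradiction. Suppose $K_{d,m,i}$ occurs at least twice in $\omega$, and pick two occurrences that are adjacent inside $\omega$. Since $\omega\prec F_{d,\infty}$, any occurrence of $K_{d,m,i}$ in $F_{d,\infty}$ lying between our two chosen positions would also be contained in $\omega$; thus the two chosen occurrences correspond to two consecutive occurrences $K_{d,m,i,p}$, $K_{d,m,i,p+1}$ in the ambient sequence. By Theorem 3.6(1), the gap between them is either $G_A(K_{d,m,i})$ or $G_B(K_{d,m,i})$, and the block $K_{d,m,i}\,G\,K_{d,m,i}$ (with the overlap/inverse-word convention of \S 1.1 when the gap is negative) is a factor of $\omega$.

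I would then invoke Lemma 6.1 case by case. For $0\le i\le d-2$: if the gap is $G_A$, part (1) of Lemma 6.1 gives $K_{d,m,i+1}\prec\omega$; if the gap is $G_B$, part (2) gives $K_{d,m+1,0}=G_B\prec\omega$. For $i=d-1$: if the gap is $G_A$, part (3) gives $K_{d,m+1,0}=G_A\prec\omega$; if the gap is $G_B$, part (4) gives $K_{d,m+2,0}\prec\omega$. In every case $\omega$ contains a kernel word strictly greater than $K_{d,m,i}$ under the order $\sqsubset$, contradicting
$$Ker(\omega)=\max_{\sqsubset}\{K_{d,n,j}:K_{d,n,j}\prec\omega\}=K_{d,m,i}.$$

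The main subtlety I anticipate is cleanly handling the overlapping case: when $G_A$ or $G_B$ is presented as an inverse word, the formal product $K_{d,m,i}\,G\,K_{d,m,i}$ is shorter than $2|K_{d,m,i}|+|G|$, and one must verify that after the cancellations prescribed by \S 1.1 the resulting longer kernel word from Lemma 6.1 really appears as a genuine factor of $\omega$ (and not merely as a formal expression in the free-monoid-with-inverses). This is essentially the same bookkeeping already carried out in Sections 3 and 5, so no new technique should be required beyond a careful application of the established conventions.
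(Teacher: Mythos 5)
Your proposal is correct and follows essentially the same route as the paper: assume a second occurrence of $K_{d,m,i}$ in $\omega$, use Theorems 3.5/3.6 to identify the gap as $G_A$ or $G_B$, and apply Lemma 6.1 case by case to produce a kernel word strictly larger under $\sqsubset$ inside $\omega$, contradicting the maximality in the definition of $Ker(\omega)$. Your extra remarks on reducing to consecutive occurrences and on handling the inverse-word gaps are sound refinements of the same argument, not a different method.
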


\begin{proof} Let $Ker(\omega)=K_{d,m,i}$. Suppose $K_{d,m,i}$ occurs in $\omega$ twice. By Theorem 3.5, 3.6 and Lemma 6.1, we know:
(1) when $0\leq i\leq d-2$, $K_{d,m,i+1}$ or $K_{d,m+1,0}$ occurs in $\omega$;
(2) when $i=d-1$, $K_{d,m+1,0}$ or $K_{d,m+2,0}$ occurs in $\omega$.
All of the four factors are kernel word with higher order than $K_{d,m,i}$. It contradicts the hypotheses $Ker(\omega)=K_{d,m,i}$.
\end{proof}

\begin{proposition} Let $Ker(\omega)=K_{d,m,i}$, then $\omega\prec E_{d,m,i}$.
\end{proposition}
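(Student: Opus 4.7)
The plan is to combine Theorem 6.2 with the explicit shape of the envelope given in Proposition 2.10. By the weak uniqueness of the kernel decomposition, $\omega=\mu_1(\omega)\ast K_{d,m,i}\ast\mu_2(\omega)$ with $K_{d,m,i}$ occurring exactly once in $\omega$, and by Proposition 2.10 the envelope has the matching form $E_{d,m,i}=\mu_1\ast K_{d,m,i}\ast\mu_2$ with $\mu_1=\delta_{m+1}^{-1}K_{d,m+1,0}=F_{d,m}\delta_m^{-1}$ (of length $f_{d,m}-1$) and $\mu_2=K_{d,m+1,0}\delta_{m+1}^{-1}$. It therefore suffices to verify the length bounds $|\mu_1(\omega)|\le|\mu_1|$ and $|\mu_2(\omega)|\le|\mu_2|$: once these hold, agreement on the common central block $K_{d,m,i}$ (together with the fact that at any occurrence of $\omega$ the kernel sits inside an occurrence of $E_{d,m,i}$ by Theorem 5.4) forces $\mu_1(\omega)\triangleright\mu_1$ and $\mu_2(\omega)\triangleleft\mu_2$, hence $\omega\prec E_{d,m,i}$.

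I would establish the left bound by contradiction. Suppose $|\mu_1(\omega)|\ge|\mu_1|+1$ and fix any occurrence of $\omega$ in $F_{d,\infty}$; by Theorem 5.4 its internal $K_{d,m,i}$ sits at the centre of some $E_{d,m,i,p}$. For $p=1$ the bound is immediate from Theorem 3.3, which pins the prefix of $F_{d,\infty}$ before $K_{d,m,i,1}$ to have length exactly $|\mu_1|=f_{d,m}-1$, leaving no room to extend. For $p\ge2$, Theorem 4.2 shows that consecutive envelope occurrences always overlap (every gap word of $E_{d,m,i}$ has the form $E_{d,m',i'}^{-1}$ when $m\ge 1$), and a short inspection of the overlap lengths against $|\mu_1|$ shows that the single new letter placed just to the left of $\mu_1$ in $F_{d,\infty}$ is either the first letter of $K_{d,m,i,p-1}$ or the first letter $\delta_{m+1}$ of $K_{d,m+1,0}$. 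In the first case the extended $\omega$ contains two copies of $K_{d,m,i}$ separated by the gap $G_A(K_{d,m,i})$, so Lemma 6.1(1) or 6.1(4) produces $K_{d,m,i+1}$ or $K_{d,m+2,0}$ inside $\omega$; in the second case prepending $\delta_{m+1}$ to $\mu_1$ rebuilds $K_{d,m+1,0}$ directly. In either case $Ker(\omega)\sqsupset K_{d,m,i}$, contradicting the hypothesis.

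The right bound $|\mu_2(\omega)|\le|\mu_2|$ is obtained by an entirely symmetric argument, cleanest to phrase by invoking the palindrome symmetries $K_{d,m,i}=\overleftarrow{K_{d,m,i}}$ and $E_{d,m,i}=\overleftarrow{E_{d,m,i}}$ from Properties 2.8 and 2.11, which turn the right-extension problem into a left-extension problem for $\overleftarrow{\omega}$; alternatively one reruns the same case analysis with the symmetric gap identities.

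The main obstacle is the structural bookkeeping step that identifies the extra letter just left of $\mu_1$ with either the first letter of $K_{d,m,i,p-1}$ or with $\delta_{m+1}$. This is a case analysis over the ranges $i=0$, $1\le i\le d-2$, $i=d-1$ and over the two possible gap types $G_A(E_{d,m,i})$, $G_B(E_{d,m,i})$ of Theorem 4.2, but in every branch it reduces to a single arithmetic comparison between the relevant overlap length $|E_{d,m',i'}|$ and the quantity $|\mu_1|+1$, followed by reading off the first letter of the resulting kernel word from the defining formula of Section 2. No fundamentally new idea is needed beyond the material already available.
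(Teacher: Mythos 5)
Your overall strategy is the paper's: anchor every occurrence of $\omega$ on its kernel occurrence $K_{d,m,i,p}$, use Theorem 5.4 to see that $K_{d,m,i,p}$ sits inside $E_{d,m,i,p}=\mu_1\ast K_{d,m,i,p}\ast\mu_2$, and derive a contradiction with $Ker(\omega)=K_{d,m,i}$ if $\omega$ spills over either end of the envelope. Where you diverge is in how you handle the extra letter to the left of $\mu_1$: you propose to locate it via an overlap analysis of consecutive envelope occurrences (Theorem 4.2) and then invoke Lemma 6.1 to manufacture a higher-order kernel, deferring the decisive step to ``a short inspection of the overlap lengths.'' The paper's proof makes that entire identification unnecessary: since the alphabet is binary and $\delta_m\neq\delta_{m+1}$, the unknown letter is either $\delta_{m+1}$, in which case $\delta_{m+1}\mu_1=K_{d,m+1,0}\prec\omega$ immediately, or $\delta_m$, in which case the one-line computation $\delta_m\mu_1 K_{d,m,i}=\delta_m F_{d,m}^{i+1}F_{d,m-1}\delta_{m-1}^{-1}$ yields $K_{d,m,i+1}$ (for $i\leq d-2$) or $K_{d,m+1,0}$ (for $i=d-1$); either way a kernel word strictly above $K_{d,m,i}$ lies in $\omega$, a contradiction, and the right-hand bound follows symmetrically. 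Your route is salvageable (the dichotomy ``first letter of $K_{d,m,i,p-1}$ or $\delta_{m+1}$'' does check out against the gap lengths of Theorems 3.5 and 4.2, and you do not actually need the $p=1$ case as a separate branch), but the deferred inspection is precisely where the real work of your version lives, and it also requires verifying that the relevant gap is $G_A$ rather than $G_B$ before Lemma 6.1 applies. I would recommend replacing that step with the two-letter observation and the direct computation above; it is both shorter and removes the only under-justified claim in your argument.
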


\begin{proof} By Theorem 5.4, for all $p\geq1$ we have $E_{d,m,i,p}=\mu_1\ast K_{d,m,i,p}\ast\mu_2$,
where $\mu_1=\delta_{m+1}^{-1}K_{d,m+1,0}$ and $\mu_2=K_{d,m+1,0}\delta_{m+1}^{-1}$. It give the structure around $K_{d,m,i,p}$.

Let $\omega=\mu'_1(\omega)\ast K_{d,m,i}\ast\mu'_2(\omega)$. Suppose $\omega\not\prec E_{d,m,i}$, then there are two cases:

Case 1: $\mu_1$ is a proper suffix of $\mu'_1(\omega)$;

Case 2: $\mu_2$ is a proper prefix of $\mu'_2(\omega)$.

Case 1 means $\delta_{m+1}\mu_1=K_{d,m+1,0}\prec\omega$ or $\delta_{m}\mu_1K_{d,m,i}\prec\omega$. Furthermore,
\begin{equation*}
\begin{split}
&\delta_{m}\mu_1K_{d,m,i}=\delta_{m}\delta_{m+1}^{-1}K_{d,m+1,0}K_{d,m,i}\\
=&\delta_{m}\delta_{m+1}^{-1}\ast\delta_{m+1}F_{d,m}\delta_m^{-1}\ast
\delta_mF_{d,m}^iF_{d,m-1}\delta_{m-1}^{-1}
=\delta_{m}F_{d,m}^{i+1}F_{d,m-1}\delta_{m-1}^{-1}\\
=&\begin{cases}
K_{d,m,i+1},&0\leq i\leq d-2;\\
\delta_{m}F_{d,m+1}\delta_{m-1}^{-1}=K_{d,m+1,0},&i=d-1.
\end{cases}
\end{split}
\end{equation*}
All of them mean there are kernel word with higher order than $K_{d,m,i}$ belong to $\omega$. It contradicts the hypotheses of $Ker(\omega)=K_{d,m,i}$.
The analysis of Case 2 is similar.
\end{proof}


\begin{Theorem}[Uniqueness of kernel decomposition, strong]\

Let $Ker(\omega)=K_{d,m,i}$, then $\omega_p$ has an unique kernel decomposition as
$$\omega_p=\mu_1(\omega)\ast K_{d,m,i,p}\ast\mu_2(\omega),$$
where $\mu_1(\omega)\triangleright\delta_{m+1}^{-1}K_{d,m+1,0}$ and $\mu_2(\omega)\triangleleft K_{d,m+1,0}\delta_{m+1}^{-1}$ are constant words depending only on $\omega$.
\end{Theorem}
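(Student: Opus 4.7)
The plan is to upgrade Theorem 6.2 to an occurrence-wise statement by combining it with the envelope-word surrounding provided by Theorem 5.4 and the containment $\omega \prec E_{d,m,i}$ from Proposition 6.3. The existence and uniqueness of the constant pair $(\mu_1(\omega),\mu_2(\omega))$ are already given by the weak theorem, so the real work is to show that these same flanks sit around the $p$-th kernel occurrence $K_{d,m,i,p}$ for every $p$, and to identify the outer envelope in which $\mu_1(\omega)$, $\mu_2(\omega)$ live.

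First I would extract the flank containments $\mu_1(\omega)\triangleright\delta_{m+1}^{-1}K_{d,m+1,0}$ and $\mu_2(\omega)\triangleleft K_{d,m+1,0}\delta_{m+1}^{-1}$. By Proposition 6.3, $\omega\prec E_{d,m,i}$, and by Proposition 2.10 (equivalently Theorem 5.4) the envelope equals $\mu_1\ast K_{d,m,i}\ast\mu_2$ with $\mu_1=\delta_{m+1}^{-1}K_{d,m+1,0}$ and $\mu_2=K_{d,m+1,0}\delta_{m+1}^{-1}$. Because the unique internal kernel of $\omega$ (weak uniqueness) must align with the internal kernel of the envelope when $\omega$ is embedded into $E_{d,m,i}$, the length comparison forces $\mu_1(\omega)$ to be a suffix of $\mu_1$ and $\mu_2(\omega)$ to be a prefix of $\mu_2$.

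Next I would establish a position-preserving bijection between the occurrences $\{\omega_p\}_{p\geq 1}$ and $\{K_{d,m,i,p}\}_{p\geq 1}$ shifted by $|\mu_1(\omega)|$. In one direction, Theorem 5.4 tells us that every $K_{d,m,i,p}$ is surrounded in $F_{d,\infty}$ by $\mu_1$ on the left and $\mu_2$ on the right, so truncating these flanks to $\mu_1(\omega)$ and $\mu_2(\omega)$ produces an occurrence of $\omega$ at position $L(K_{d,m,i},p)-|\mu_1(\omega)|$. In the other direction, every occurrence $\omega_p$ contains, by weak uniqueness, a single internal kernel copy, necessarily some $K_{d,m,i,q}$ sitting at position $L(\omega,p)+|\mu_1(\omega)|$, so $L(\omega,p)=L(K_{d,m,i},q)-|\mu_1(\omega)|$. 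The two maps are strict inverses of each other and strictly increasing in position, which forces $q=p$ and hence $\omega_p=\mu_1(\omega)\ast K_{d,m,i,p}\ast\mu_2(\omega)$.

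The main obstacle I expect is precisely the order-preserving bijection argument, specifically verifying that the forward and backward correspondences both exist and are mutually inverse. Existence in the backward direction is automatic from weak uniqueness; existence in the forward direction requires Theorem 5.4 to guarantee that the envelope flanks are always in place around every kernel occurrence, with no overlap between neighboring occurrences that could spoil the truncation. Once this bijection is in hand, the uniqueness of the decomposition at each $\omega_p$ is inherited directly from Theorem 6.2 applied to $\omega_p$ itself.
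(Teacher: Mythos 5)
Your proposal is correct and follows essentially the same route as the paper: it combines Proposition 6.3 ($\omega\prec E_{d,m,i}$) with Theorem 5.4 ($E_{d,m,i,p}=\mu_1\ast K_{d,m,i,p}\ast\mu_2$) to place the fixed flanks around every kernel occurrence. The only difference is that you spell out the order-preserving bijection between $\{\omega_p\}$ and $\{K_{d,m,i,p}\}$ that justifies matching the $p$-th occurrences, a step the paper compresses into the single line ``So $Ker(\omega_p)=K_{d,m,i,p}$''; your version is the more complete one.
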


\begin{proof} Since $Ker(\omega)=K_{d,m,i}$, by Proposition 6.3,
we have $K_{d,m,i}\prec\omega\prec E_{d,m,i}$. By Theorem 5.4, $E_{d,m,i,p}=\mu_1\ast K_{d,m,i,p}\ast\mu_2$, where $\mu_1=\delta_{m+1}^{-1}K_{d,m+1,0}$ and $\mu_2=K_{d,m+1,0}\delta_{m+1}^{-1}$.

So $Ker(\omega_p)=K_{d,m,i,p}$, the theorem holds.
\end{proof}

\noindent\textbf{Remark.} Theorem 6.2 (weak version) $\not\Rightarrow$ Theorem 6.4 (strong version).

For instance, let $\tau(a,b)=(abaa,abaa)$ and sequence $S=\tau^\infty(a)=abaaabaaabaa\cdots$. Let set $\mathcal{K}=\{a,b,aa,aaa\}\cup \{\omega\prec S:~|\omega|\geq5\}$. It is easy to check $\mathcal{K}$ is a kernel set of sequence $S$.
Consider factor $\omega=abaa$. $Ker(abaa)=aa$ and $abaa=ab\ast aa\ast\varepsilon$. So the weak version holds.
But $\omega_2=ab\ast aa_3\ast\varepsilon$, the strong version doesn't hold.

\begin{proposition}[Expression of $\omega$] Let $Ker(\omega)=K_{d,m,i}$, then $\omega$ has an expression as below:
$$\omega=F_{d,m}[x,f_{d,m}-1]\ast K_{d,m,i}\ast(\delta_{m+1}F_{d,m})[1,y],\eqno(\star)$$
where $1\leq x\leq f_{d,m}$ and $0\leq y\leq f_{d,m}-1$.
\end{proposition}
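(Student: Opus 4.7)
The plan is to derive the proposition directly from the strong uniqueness of kernel decomposition (Theorem 6.4), using only the explicit form of $K_{d,m+1,0}$ and its palindrome property (Property 2.8). Since Theorem 6.4 already pins down $\omega$ as $\mu_1(\omega)\ast K_{d,m,i}\ast \mu_2(\omega)$ with $\mu_1(\omega)\triangleright \delta_{m+1}^{-1}K_{d,m+1,0}$ and $\mu_2(\omega)\triangleleft K_{d,m+1,0}\delta_{m+1}^{-1}$, the task reduces to rewriting the two bounding words $\delta_{m+1}^{-1}K_{d,m+1,0}$ and $K_{d,m+1,0}\delta_{m+1}^{-1}$ as the prefix $F_{d,m}[1,f_{d,m}-1]$ and a prefix of $\delta_{m+1}F_{d,m}$, respectively, and then characterising their suffixes/prefixes.

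For the left bound, I would simply unfold the definition $K_{d,m+1,0}=\delta_{m+1}F_{d,m}\delta_m^{-1}$, so that
$$\delta_{m+1}^{-1}K_{d,m+1,0}=F_{d,m}\delta_m^{-1}=F_{d,m}[1,f_{d,m}-1].$$
Every suffix of $F_{d,m}[1,f_{d,m}-1]$ is of the form $F_{d,m}[x,f_{d,m}-1]$ for some $1\le x\le f_{d,m}$ (taking $x=f_{d,m}$ for the empty suffix $\varepsilon$), which gives the claimed shape of $\mu_1(\omega)$.

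For the right bound, the subtle step is to identify the last letter of $K_{d,m+1,0}$. By Property 2.8 the word $K_{d,m+1,0}$ is a palindrome, and since its first letter is $\delta_{m+1}$, so is its last letter; writing $K_{d,m+1,0}=\delta_{m+1}F_{d,m}[1,f_{d,m}-1]$ this forces $F_{d,m}[f_{d,m}-1]=\delta_{m+1}$. Stripping the final $\delta_{m+1}$ I then get
$$K_{d,m+1,0}\delta_{m+1}^{-1}=\delta_{m+1}F_{d,m}[1,f_{d,m}-2]=(\delta_{m+1}F_{d,m})[1,f_{d,m}-1].$$
Consequently each prefix of $K_{d,m+1,0}\delta_{m+1}^{-1}$ is exactly $(\delta_{m+1}F_{d,m})[1,y]$ for some $0\le y\le f_{d,m}-1$ (with $y=0$ giving $\varepsilon$), matching the claimed shape of $\mu_2(\omega)$.

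Substituting the two reformulations back into the decomposition in Theorem 6.4 yields $(\star)$. The only nontrivial point is the palindrome/length bookkeeping for $\mu_2$, i.e. using the palindromicity of $K_{d,m+1,0}$ to conclude that the trailing letter removed by $\delta_{m+1}^{-1}$ is indeed the second-to-last letter of $F_{d,m}$; everything else is a direct length count together with the definitions of the operators $\cdot^{-1}$ and $\cdot[i,j]$ recalled in Section 1.1.
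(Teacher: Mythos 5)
Your proposal is correct and follows essentially the same route as the paper: both derive $(\star)$ from the kernel decomposition with $\mu_1(\omega)\triangleright\delta_{m+1}^{-1}K_{d,m+1,0}$ and $\mu_2(\omega)\triangleleft K_{d,m+1,0}\delta_{m+1}^{-1}$, and then unfold $K_{d,m+1,0}=\delta_{m+1}F_{d,m}\delta_m^{-1}$ to identify the suffixes as $F_{d,m}[x,f_{d,m}-1]$ and the prefixes as $(\delta_{m+1}F_{d,m})[1,y]$. The only difference is that you explicitly justify, via the palindromicity of $K_{d,m+1,0}$, that its last letter is $\delta_{m+1}$, a point the paper's one-line proof leaves implicit.
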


\begin{proof} Since $K_{d,m+1,0}=\delta_{m+1}F_{d,m}\delta_m^{-1}$, let $\omega[x,x-1]=\varepsilon$, then:

(1) all suffix of $\delta_{m+1}^{-1}K_{d,m+1,0}$ are $F_{d,m}[x,f_{d,m}-1]$ where $1\leq x\leq f_{d,m}$;

(2) all prefix of $K_{d,m+1,0}\delta_{m+1}^{-1}$ are $(\delta_{m+1}F_{d,m})[1,y]$ where $0\leq y\leq f_{d,m}-1$.
\end{proof}

\vspace{0.4cm}

\noindent\emph{6.2~Gaps and Gap Sequence of Arbitrary Words}

\begin{Theorem}[Gap sequence of $\omega$] Let $Ker(\omega)=K_{d,m,i}$, then

(1) Any factor $\omega$ has exactly two distinct gaps $G_A(\omega)$ and $G_B(\omega)$, where $A=1$ and
\begin{equation*}
B=\min\{p:G_p(\omega)\neq G_1(\omega)\}=\begin{cases}
d-i,&0\leq i\leq d-2;\\
d+1,&i=d-1.
\end{cases}
\end{equation*}

(2) The gap sequence $\{G_p(\omega)\}_{p\geq1}$ is sequence $\mathcal{S}$ on $\{G_A(\omega),G_B(\omega)\}$, where
\begin{equation*}
\mathcal{S}=\begin{cases}
\sigma_{d-i-1}(F_{d,\infty}),&0\leq i\leq d-2;\\
F_{d,\infty},&i=d-1.
\end{cases}
\end{equation*}
\end{Theorem}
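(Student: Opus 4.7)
The plan is to reduce the statement about an arbitrary factor $\omega$ to the corresponding statement about its kernel word $K_{d,m,i}$, which has already been proved in Theorems 3.5 and 3.6. The engine of the reduction is the strong uniqueness of kernel decomposition (Theorem 6.4), which gives, for \emph{every} $p\geq 1$,
\[
\omega_p = \mu_1(\omega)\ast K_{d,m,i,p}\ast\mu_2(\omega),
\]
with $\mu_1(\omega),\mu_2(\omega)$ constant words (depending only on $\omega$, not on $p$).

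First I would read off from this decomposition the position formula
\[
L(\omega,p) = L(K_{d,m,i},p) - |\mu_1(\omega)|,\qquad p\geq 1,
\]
together with $|\omega| - |K_{d,m,i}| = |\mu_1(\omega)| + |\mu_2(\omega)|$, both of which are $p$-independent. Thus the occurrences of $\omega$ in $F_{d,\infty}$ are rigid shifts of the occurrences of $K_{d,m,i}$, and the signed distance from the end of $\omega_p$ to the start of $\omega_{p+1}$ differs from the corresponding distance for $K_{d,m,i,p},K_{d,m,i,p+1}$ by the fixed quantity $-(|\mu_1(\omega)|+|\mu_2(\omega)|)$.

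Using the free-monoid-with-inverse convention of Definition 1.2, this constant shift in the signed distance translates to the identity
\[
G_p(\omega) = \mu_2(\omega)^{-1}\ast G_p(K_{d,m,i})\ast \mu_1(\omega)^{-1},\qquad p\geq 1.
\]
Since $\mu_1(\omega),\mu_2(\omega)$ are the same for all $p$, the map $G_p(K_{d,m,i})\mapsto G_p(\omega)$ is an injection on values. Theorem 3.6 already tells us that $\{G_p(K_{d,m,i})\}_{p\geq 1}$ takes exactly two distinct values $G_A(K_{d,m,i}), G_B(K_{d,m,i})$ and is equal to the sequence $\mathcal{S}$ on those two symbols; the display above therefore transfers this two-valuedness and the same substitutive structure to $\{G_p(\omega)\}_{p\geq 1}$, with the new symbols $G_A(\omega)=\mu_2(\omega)^{-1}G_A(K_{d,m,i})\mu_1(\omega)^{-1}$ and $G_B(\omega)=\mu_2(\omega)^{-1}G_B(K_{d,m,i})\mu_1(\omega)^{-1}$. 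In particular, the index $B=\min\{p:G_p(\omega)\neq G_1(\omega)\}$ coincides with the one computed for the kernel in Theorem 3.5, yielding the formula stated in (1), and the gap sequence in (2) is $\mathcal{S}$ for exactly the same reason as for the kernel.

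The main obstacle, and essentially the only thing needing care, is justifying the free-group identity $G_p(\omega)=\mu_2(\omega)^{-1}G_p(K_{d,m,i})\mu_1(\omega)^{-1}$ uniformly across the three cases (adjacent, separated, overlapped) of Definition 1.2. One has to check that formal cancellation between $\mu_2(\omega)^{-1}$ (resp.\ $\mu_1(\omega)^{-1}$) and the prefix (resp.\ suffix) of the ambient text between $K_{d,m,i,p}$ and $K_{d,m,i,p+1}$ reproduces exactly the word (or inverse word) prescribed by the definition of $G_p(\omega)$. By Proposition 6.3 we know $\omega\prec E_{d,m,i}$, so $|\mu_1(\omega)|+|\mu_2(\omega)|\leq|E_{d,m,i}|-|K_{d,m,i}|$, and hence each $\mu_i(\omega)$ fits inside the constant ``envelope collar'' around the kernel that Theorem 5.4 guarantees appears around every occurrence $K_{d,m,i,p}$. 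This uniformity makes the case analysis routine: once carried out, parts (1) and (2) follow immediately from Theorems 3.5 and 3.6.
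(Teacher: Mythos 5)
Your proposal is correct and follows essentially the same route as the paper: the paper leaves Theorem 6.6 without an explicit proof, but its supporting machinery is exactly what you use, namely the strong kernel decomposition $\omega_p=\mu_1(\omega)K_{d,m,i,p}\mu_2(\omega)$ (Theorem 6.4) together with the transfer formula $G_p(\omega)=\mu_2(\omega)^{-1}G_p(K_{d,m,i})\mu_1(\omega)^{-1}$ (Proposition 6.7, justified there by the diagram in Fig.~6.1), which pushes the two-gap property and the substitutive structure from Theorems 3.5 and 3.6 to arbitrary $\omega$. Your added remarks on injectivity of the value map and on the envelope collar guaranteeing uniform cancellation are sound and, if anything, slightly more careful than the paper's diagrammatic argument.
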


\begin{proposition}[Relation between $G_p(\omega)$ and $G_p(Ker(\omega))$, $p\geq1$]\

Let $Ker(\omega)=K_{d,m,i}$, then
$$G_p(\omega)=\mu_2(\omega)^{-1}\ast G_p(K_{d,m,i})\ast\mu_1(\omega)^{-1},~p\geq1.$$
where $\mu_1(\omega)=F_{d,m}[x,f_{d,m}-1]$, $\mu_2(\omega)=(\delta_{m+1}F_{d,m})[1,y]$, $1\leq x\leq f_{d,m}$ and $0\leq y\leq f_{d,m}-1$.
\end{proposition}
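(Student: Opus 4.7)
The plan is to derive the formula as a direct consequence of the strong uniqueness of kernel decomposition (Theorem 6.4). By that theorem, for every $p \geq 1$ the $p$-th occurrence of $\omega$ factors as $\omega_p = \mu_1(\omega) \ast K_{d,m,i,p} \ast \mu_2(\omega)$, with $\mu_1(\omega)$ and $\mu_2(\omega)$ depending only on $\omega$. Consequently, writing $\ell_p := L(K_{d,m,i}, p)$ for the starting position of the $p$-th kernel, $\omega_p$ occupies the positions $[\ell_p - |\mu_1(\omega)|,\ \ell_p + |K_{d,m,i}| + |\mu_2(\omega)| - 1]$ in $F_{d,\infty}$.

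The first step is to translate Definition 1.2 into position arithmetic. The $\omega$-gap $G_p(\omega)$ is determined by the pair (end of $\omega_p$, start of $\omega_{p+1}$), while the $K$-gap $G_p(K_{d,m,i})$ is determined by (end of $K_{d,m,i,p}$, start of $K_{d,m,i,p+1}$). These two boundary pairs differ by the additive shift $(+|\mu_2(\omega)|,\ -|\mu_1(\omega)|)$, so the $\omega$-gap region is the $K$-gap region shortened by $|\mu_2(\omega)|$ letters on the left and $|\mu_1(\omega)|$ letters on the right. In the separated (or adjacent) case, the first $|\mu_2(\omega)|$ letters of the $K$-gap are forced to spell $\mu_2(\omega)$ by the decomposition of $\omega_p$, and the last $|\mu_1(\omega)|$ letters spell $\mu_1(\omega)$ by the decomposition of $\omega_{p+1}$; applying the cancellation conventions $\nu \nu^{-1} = \varepsilon$ from Section~1.1 then yields the claimed identity.

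The main obstacle is the overlapped case, where $G_p(K_{d,m,i})$ is itself an inverse word (for instance $G_A(K_{d,m,i}) = K_{d,m,i-1}^{-1}$ when $1 \leq i \leq d-2$, per Theorem 3.5). The expression $\mu_2(\omega)^{-1} \ast G_p(K_{d,m,i}) \ast \mu_1(\omega)^{-1}$ is then a formal product involving two negative and one negative factor, which must still reduce to a legitimate gap word: either $\varepsilon$, a positive factor, or the inverse of a positive factor. To handle this, I would combine Theorem 3.5 (explicit forms of $G_A$ and $G_B$) with Proposition 6.5, which identifies $\mu_1(\omega) = F_{d,m}[x, f_{d,m}-1]$ and $\mu_2(\omega) = (\delta_{m+1}F_{d,m})[1, y]$ for $1 \leq x \leq f_{d,m}$ and $0 \leq y \leq f_{d,m}-1$. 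A short case check over $i \in \{0,1,\ldots,d-1\}$ and over the two gap values confirms that the $\omega$-boundaries always lie inside the unambiguously-known segment $\mu_1(\omega)\, K_{d,m,i}\, G_p(K_{d,m,i})\, K_{d,m,i}\, \mu_2(\omega)$ of $F_{d,\infty}$ (with overlaps understood via the inverse-word formalism), so the formal cancellation reduces to exactly the $\omega$-gap predicted by the position arithmetic above.
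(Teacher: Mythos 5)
Your proposal is correct and follows essentially the same route as the paper: the paper's proof consists of a diagram encoding exactly the position arithmetic you describe, namely that $\omega_p=\mu_1(\omega)\ast K_{d,m,i,p}\ast\mu_2(\omega)$ for every $p$ (via the strong uniqueness of kernel decomposition), so the $\omega$-gap is the kernel gap trimmed by $\mu_2(\omega)$ on the left and $\mu_1(\omega)$ on the right under the inverse-word formalism. Your extra attention to the overlapped case is a reasonable elaboration of what the paper leaves implicit, not a different argument.
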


\begin{proof} The proof of the Proposition will be easy by the following diagram.
\setlength{\unitlength}{1mm}
\begin{center}
\begin{picture}(120,15)
\linethickness{1pt}
\put(0,5){\line(1,0){15}}
\linethickness{3pt}
\put(15,5){\line(1,0){20}}
\linethickness{1pt}
\put(35,5){\line(1,0){55}}
\linethickness{3pt}
\put(90,5){\line(1,0){20}}
\linethickness{1pt}
\put(110,5){\line(1,0){10}}
\put(0,5){\line(0,1){10}}
\put(45,5){\line(0,1){10}}
\put(75,5){\line(0,1){10}}
\put(120,5){\line(0,1){10}}
\put(15,0){\line(0,1){10}}
\put(35,0){\line(0,1){10}}
\put(90,0){\line(0,1){10}}
\put(110,0){\line(0,1){10}}
\put(6,7){$\mu_1$}
\put(81,7){$\mu_1$}
\put(38,7){$\mu_2$}
\put(113,7){$\mu_2$}
\put(19,0){$K_{d,m,i,p}$}
\put(92,0){$K_{d,m,i,p+1}$}
\put(23,12){$\omega_p$}
\put(96,12){$\omega_{p+1}$}
\put(55,9){$G_p(\omega)$}
\put(51,0){$G_p(K_{d,m,i})$}
\put(21,13){\vector(-1,0){21}}
\put(28,13){\vector(1,0){17}}
\put(95,13){\vector(-1,0){20}}
\put(104,13){\vector(1,0){16}}
\put(54,10){\vector(-1,0){9}}
\put(67,10){\vector(1,0){8}}
\put(50,1){\vector(-1,0){15}}
\put(70,1){\vector(1,0){20}}
\end{picture}
\end{center}
\centerline{Fig. 6.1: The relation among $\omega_p$, $K_{d,m,i,p}$ $G_p(\omega)$ and $G_p(K_{d,m,i})$.}
\end{proof}

\begin{proposition}[Relation between $G_0(\omega)$ and $G_0(Ker(\omega))$]
Let $Ker(\omega)=K_{d,m,i}$, then
$$G_0(\omega)=G_0(K_{d,m,i})\ast\mu_1(\omega)^{-1},$$
where $\mu_1(\omega)=F_{d,m}[x,f_{d,m}-1]$ and $1\leq x\leq f_{d,m}$.
\end{proposition}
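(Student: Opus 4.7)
The plan is to assemble the claim from four earlier results: Theorem 3.3 (which pins down the position of $K_{d,m,i,1}$), Proposition 6.5 (the shape of $\omega$ around its kernel), Theorem 6.4 (which extends that shape to each occurrence $\omega_p$), and the fact that $F_{d,m}\triangleleft F_{d,\infty}$.

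First I would invoke Proposition 6.5 to write $\omega=\mu_1(\omega)\ast K_{d,m,i}\ast\mu_2(\omega)$ with $\mu_1(\omega)=F_{d,m}[x,f_{d,m}-1]$ for some $1\leq x\leq f_{d,m}$; in particular $|\mu_1(\omega)|=f_{d,m}-x$. Next, by the strong kernel decomposition (Theorem 6.4) applied at $p=1$, the first occurrence of $\omega$ has the form $\omega_1=\mu_1(\omega)\ast K_{d,m,i,1}\ast\mu_2(\omega)$. Since the decomposition is a bijection between occurrences of $\omega$ and occurrences of $K_{d,m,i}$ preserving positions, the first $\omega$-occurrence sits at position $L(\omega,1)=L(K_{d,m,i},1)-|\mu_1(\omega)|=f_{d,m}-|\mu_1(\omega)|$, using $L(K_{d,m,i},1)=f_{d,m}$ from Theorem 3.3. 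Hence
\begin{equation*}
G_0(\omega)=F_{d,\infty}[1,L(\omega,1)-1]=F_{d,\infty}[1,x-1].
\end{equation*}

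Now I would verify that the right-hand side $G_0(K_{d,m,i})\ast\mu_1(\omega)^{-1}$ makes sense and equals the same thing. By Theorem 3.3, $G_0(K_{d,m,i})=F_{d,\infty}[1,f_{d,m}-1]$, and since $F_{d,m}\triangleleft F_{d,\infty}$ we have $F_{d,\infty}[1,f_{d,m}-1]=F_{d,m}[1,f_{d,m}-1]$. Therefore $\mu_1(\omega)=F_{d,m}[x,f_{d,m}-1]$ is literally a suffix of $G_0(K_{d,m,i})$, so the expression $G_0(K_{d,m,i})\ast\mu_1(\omega)^{-1}$ is a legitimate application of the inverse-word notation from Subsection 1.1 and equals the prefix left after deleting $\mu_1(\omega)$, namely $F_{d,m}[1,x-1]=F_{d,\infty}[1,x-1]$. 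Comparing with the previous display finishes the proof.

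The only potentially delicate point is identifying $\omega_1$ with the occurrence that wraps $K_{d,m,i,1}$; but this is exactly what Theorem 6.4 delivers, since that theorem guarantees a position-preserving one-to-one correspondence $\omega_p\leftrightarrow K_{d,m,i,p}$, so the two indices are the same. Everything else is bookkeeping between the lengths $|\mu_1(\omega)|=f_{d,m}-x$ and the endpoint $x-1$ of the claimed prefix.
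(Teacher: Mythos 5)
Your argument is correct and is essentially the paper's own proof: the paper simply draws the diagram (Fig.~6.2) showing $\omega_1=\mu_1\ast K_{d,m,i,1}\ast\mu_2$ sitting over $K_{d,m,i,1}$, so that the prefix before $\omega_1$ is the prefix before $K_{d,m,i,1}$ with $\mu_1$ stripped off, which is exactly the bookkeeping you spell out via Theorem 3.3, Proposition 6.5 and Theorem 6.4. Your explicit check that $\mu_1(\omega)$ really is a suffix of $G_0(K_{d,m,i})$ (so the inverse-word cancellation is legitimate) is a detail the paper leaves implicit, but the route is the same.
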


\begin{proof} The proof of the Proposition will be easy by the following diagram.
\setlength{\unitlength}{1mm}
\begin{center}
\begin{picture}(75,15)
\linethickness{1pt}
\put(0,5){\line(1,0){45}}
\linethickness{3pt}
\put(45,5){\line(1,0){20}}
\linethickness{1pt}
\put(65,5){\line(1,0){10}}
\put(30,5){\line(0,1){10}}
\put(75,5){\line(0,1){10}}
\put(0,0){\line(0,1){15}}
\put(45,0){\line(0,1){10}}
\put(65,0){\line(0,1){10}}
\put(36,7){$\mu_1$}
\put(68,7){$\mu_2$}
\put(49,0){$K_{d,m,i,1}$}
\put(53,12){$\omega_1$}
\put(10,9){$G_0(\omega)$}
\put(13,0){$G_0(K_{d,m,i})$}
\put(51,13){\vector(-1,0){21}}
\put(58,13){\vector(1,0){17}}
\put(9,10){\vector(-1,0){9}}
\put(22,10){\vector(1,0){8}}
\put(12,1){\vector(-1,0){12}}
\put(32,1){\vector(1,0){13}}
\end{picture}
\end{center}
\centerline{Fig. 6.2: The relation among $\omega_1$, $K_{d,m,i,1}$, $G_0(\omega)$ and $G_0(K_{d,m,i})$.}
\end{proof}

\begin{corollary} Let $Ker(\omega)=K_{d,m,i}$ and $\omega$ has expression $(\star)$ in Proposition 6.5, then the prefix of $F_{d,\infty}$ before $\omega_1$ is
$F_{d,m}[1,x-1]$, denoted by $G_0(\omega)$.
\end{corollary}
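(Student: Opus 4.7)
The plan is to obtain Corollary 6.9 as an almost immediate bookkeeping consequence of Proposition 6.8 (which relates $G_0(\omega)$ to $G_0(\mathrm{Ker}(\omega))$), Theorem 3.3 (which identifies $G_0(K_{d,m,i})$ with a specific prefix of $F_{d,\infty}$), and the explicit form of $\mu_1(\omega)$ given in Proposition 6.5. No new structural fact about $F_{d,\infty}$ is needed; the content is essentially a single concatenation/cancellation in the free monoid, once the pieces are aligned.

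The steps I would carry out, in order, are the following. First, invoke Proposition 6.8 to write
\begin{equation*}
G_0(\omega) = G_0(K_{d,m,i}) \ast \mu_1(\omega)^{-1},
\end{equation*}
where, by hypothesis $(\star)$ of Proposition 6.5, $\mu_1(\omega) = F_{d,m}[x,f_{d,m}-1]$ with $1 \leq x \leq f_{d,m}$. Second, apply Theorem 3.3 to rewrite $G_0(K_{d,m,i}) = F_{d,\infty}[1,f_{d,m}-1]$. Third, since $F_{d,m} = \sigma_d^m(a)$ is a prefix of the fixed point $F_{d,\infty}$, the prefix of $F_{d,\infty}$ of length $f_{d,m}-1$ coincides with $F_{d,m}[1,f_{d,m}-1]$, so $G_0(K_{d,m,i}) = F_{d,m}[1,f_{d,m}-1]$. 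Fourth, observe the tautological decomposition
\begin{equation*}
F_{d,m}[1,f_{d,m}-1] \;=\; F_{d,m}[1,x-1] \ast F_{d,m}[x,f_{d,m}-1],
\end{equation*}
which holds for every $1 \leq x \leq f_{d,m}$ (with the convention $F_{d,m}[1,0]=\varepsilon$ when $x=1$). Substituting this back and cancelling $F_{d,m}[x,f_{d,m}-1]$ against $\mu_1(\omega)^{-1}=F_{d,m}[x,f_{d,m}-1]^{-1}$ on the right gives exactly $G_0(\omega) = F_{d,m}[1,x-1]$, which is the claim.

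There is no real obstacle here: the only thing that could go wrong is a mismatch in index conventions at the boundary cases $x=1$ (where $\mu_1(\omega)=F_{d,m}$ itself, so $\omega_1$ starts at position $1$ and $G_0(\omega)=\varepsilon$) and $x=f_{d,m}$ (where $\mu_1(\omega)=\varepsilon$ and $G_0(\omega)=G_0(K_{d,m,i})$ directly), both of which I would check against the conventions $\tau[i,i-1]:=\varepsilon$ and the definition of $G_0$ from Section 1 to make sure the identity is literally true, not merely true up to an empty factor. Once these edge cases are verified, the corollary follows with no further argument.
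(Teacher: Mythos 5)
Your derivation is correct and matches the paper's intent exactly: the corollary is stated without proof precisely because it follows by combining Proposition 6.8 with Theorem 3.3 (using that $F_{d,m}$ is a prefix of $F_{d,\infty}$) and cancelling $\mu_1(\omega)=F_{d,m}[x,f_{d,m}-1]$, which is the computation you carry out. Your attention to the boundary cases $x=1$ and $x=f_{d,m}$ is a welcome extra check but does not change the argument.
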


Using expression $(\star)$ and the relation between $G_p(\omega)$
and $G_p(K_{d,m,i})$, we can give the expressions of $G_A(\omega)$ and $G_B(\omega)$. Since the proofs are simple and the expressions are complicated, we prefer to omit them.


\vspace{0.5cm}

\stepcounter{section}

\noindent\textbf{\large{7.~Combinatorial Properties of Factors}}

\vspace{0.4cm}

As applications, we will give some combinatorial properties of factors in sequence $F_{d,\infty}$.

\vspace{0.4cm}

\noindent\emph{7.1 Palindrome}

\vspace{0.4cm}

Since both kernel words $K_{d,m,i}$ and envelope words $E_{d,m,i}$ are palindromes, we can determine all palindromes with kernel $K_{d,m,i}$.

\begin{property}[Palindrome] Let $\omega$ have expression $(\star)$ in Proposition 6.5, then
$$\omega\text{ is palindrome }\Leftrightarrow x+y=f_{d,m}.$$
\end{property}

\begin{proof} $\omega$ is palindrome $\Leftrightarrow$ $|F_{d,m}[x,f_{d,m}-1]|=|(\delta_{m+1}F_{d,m})[1,y]|$, i.e. $f_{d,m}-x=y$.\end{proof}

\vspace{0.4cm}

\noindent\emph{7.2 Whether $\omega\prec F_{d,\infty}$ or Not ?}

\vspace{0.4cm}

By Property 6.3, we know if $\omega\prec F_{d,\infty}$, $Ker(\omega)=K_{d,m,i}$, then $\omega\prec E_{d,m,i}$. On the other hand, since $E_{d,m,i}\prec F_{d,\infty}$, then $\omega\prec E_{d,m,i}\Rightarrow\omega\prec F_{d,\infty}$. So we can get the property below.

\begin{property} Let $Ker(\omega)=K_{d,m,i}$, then
$\omega\prec F_{d,\infty}\Leftrightarrow \omega\prec E_{d,m,i}.$
\end{property}

\noindent\textbf{Remark.} 1. When $\omega\not\prec F_{d,\infty}$, the kernel word maybe occurs in $\omega$ more than once. For instance, in sequence $F_{2,\infty}$, let $\omega=aaaa$, then $Ker(\omega)=aaa$, but $\omega=\underline{aaa}a=a\underline{aaa}$.

2. $\omega$ has an unique kernel decomposition $\not\Rightarrow \omega\prec F_{d,\infty}$. For instance, in sequence $F_{2,\infty}$, let $\omega=aaabb$,
then $Ker(\omega)=aaa$ and $\omega=\underline{aaa}bb$, but $\omega\not\prec F_{d,\infty}$.

\vspace{0.4cm}

\noindent\emph{7.3 Power, Overlap and Separate Properties Between $\omega_p$ and $\omega_{p+1}$}

\vspace{0.4cm}

Notice that if we consider the position of $\omega_p\in F_{d,\infty}$, the factors $\omega_p$ and $\omega_q~(p\neq q)$ are distinct.
In fact, $\omega_p$ should be regarded as two variables $\omega$ and $p$, where $\omega$ is the factor and $p$ indicates the position of $\omega$.

Let $Ker(\omega)=K_{d,m,i}$, $0\leq i\leq d-1$. When $m=0$, $\omega\in\{a^i,i=1,2,\ldots,d\}$.
When $m\geq1$, by Theorem 3.5 and 4.2, all factor $\omega\in F_{d,\infty}$ can divide into several types according to the different lengths of gaps. We denote those types by $T_{\alpha,\beta}$.

%
%

\begin{definition}[Types] The sets $T_{\alpha,\beta}$ are defined as follow:

$T_{0,1}=\{\omega\in F_{d,\infty}: Ker(\omega)=K_{d,m,0}, G_A(\omega)>0, G_B(\omega)>0\}$;

$T_{0,2}=\{\omega\in F_{d,\infty}: Ker(\omega)=K_{d,m,0}, G_A(\omega)=0, G_B(\omega)>0\}$;

$T_{0,3}=\{\omega\in F_{d,\infty}: Ker(\omega)=K_{d,m,0}, G_A(\omega)<0, G_B(\omega)>0\}$;

$T_{0,4}=\{\omega\in F_{d,\infty}: Ker(\omega)=K_{d,m,0}, G_A(\omega)<0, G_B(\omega)=0\}$;

$T_{0,5}=\{\omega\in F_{d,\infty}: Ker(\omega)=K_{d,m,0}, G_A(\omega)<0, G_B(\omega)<0\}$;

$T_{i,1}=\{\omega\in F_{d,\infty}: Ker(\omega)=K_{d,m,i}, G_A(\omega)<0, G_B(\omega)>0\}$, $1\leq i\leq d-2$;

$T_{i,2}=\{\omega\in F_{d,\infty}: Ker(\omega)=K_{d,m,i}, G_A(\omega)<0, G_B(\omega)=0\}$, $1\leq i\leq d-2$;

$T_{i,3}=\{\omega\in F_{d,\infty}: Ker(\omega)=K_{d,m,i}, G_A(\omega)<0, G_B(\omega)<0\}$, $1\leq i\leq d-2$;

$T_{d-1,1}=\{\omega\in F_{d,\infty}: Ker(\omega)=K_{d,m,d-1}, G_A(\omega)>0, G_B(\omega)<0\}$;

$T_{d-1,2}=\{\omega\in F_{d,\infty}: Ker(\omega)=K_{d,m,d-1}, G_A(\omega)=0, G_B(\omega)<0\}$;

$T_{d-1,3}=\{\omega\in F_{d,\infty}: Ker(\omega)=K_{d,m,d-1}, G_A(\omega)<0, G_B(\omega)<0\}$.
\end{definition}

\begin{property}[Types] Let $m\geq1$, $1\leq x\leq f_{d,m}$ and $0\leq y\leq f_{d,m}-1$, then

(1) When $Ker(\omega)=K_{d,m,0}$,

$\omega\in T_{0,1}\Leftrightarrow G_A(\omega)>0,G_B(\omega)>0 \Leftrightarrow f_{d,m-1}\leq x-y\leq f_{d,m}$;

$\omega\in T_{0,2}\Leftrightarrow G_A(\omega)=0,G_B(\omega)>0 \Leftrightarrow x-y=f_{d,m-1}$;

$\omega\in T_{0,3}\Leftrightarrow G_A(\omega)<0,G_B(\omega)>0 \Leftrightarrow 0<x-y<f_{d,m-1}$;

$\omega\in T_{0,4}\Leftrightarrow G_A(\omega)<0,G_B(\omega)=0 \Leftrightarrow x-y=0$;

$\omega\in T_{0,5}\Leftrightarrow G_A(\omega)<0,G_B(\omega)<0 \Leftrightarrow 2-f_{d,m}\leq x-y<0$.

(2) When $Ker(\omega)=K_{d,m,i}$, $1\leq i\leq d-2$,

$\omega\in T_{i,1}\Leftrightarrow G_A(\omega)<0,G_B(\omega)>0 \Leftrightarrow 0\leq x-y\leq f_{d,m}$;

$\omega\in T_{i,2}\Leftrightarrow G_A(\omega)<0,G_B(\omega)=0 \Leftrightarrow x-y=0$;

$\omega\in T_{i,3}\Leftrightarrow G_A(\omega)<0,G_B(\omega)<0 \Leftrightarrow 2-f_{d,m}\leq x-y<0$.

(3) When $Ker(\omega)=K_{d,m,d-1}$,

$\omega\in T_{d-1,1}\Leftrightarrow G_A(\omega)>0,G_B(\omega)<0 \Leftrightarrow 0\leq x-y\leq f_{d,m}$;

$\omega\in T_{d-1,2}\Leftrightarrow G_A(\omega)=0,G_B(\omega)<0 \Leftrightarrow x-y=0$;

$\omega\in T_{d-1,3}\Leftrightarrow G_A(\omega)<0,G_B(\omega)<0 \Leftrightarrow 2-f_{d,m}\leq x-y<0$.
\end{property}

\begin{proof} Let $Ker(\omega)=K_{d,m,i}$, then

(a) $|G_A(\omega)|=|G_A(K_{d,m,i})|-|\mu_1|-|\mu_2|=|G_A(K_{d,m,i})|-f_{d,m}+x-y$;

(b) $|G_B(\omega)|=|G_B(K_{d,m,i})|-|\mu_1|-|\mu_2|=|G_B(K_{d,m,i})|-f_{d,m}+x-y$.

When $i=0$, consider $G_A(K_{d,m,i})=K_{d,m-1,d-1}$ and $G_B(K_{d,m,i})=K_{d,m+1,0}$, we have:

(1) $|G_A(\omega)|=f_{d,m-1}^{d-1}f_{d,m-2}-f_{d,m}+x-y$, then
$G_A(\omega)\geq0\Leftrightarrow x-y\geq f_{d,m-1}$;

(2) $|G_B(\omega)|=f_{d,m}-f_{d,m}+x-y$, then
$G_A(\omega)\geq0\Leftrightarrow x-y\geq0$.

When $1\leq i\leq d-2$, consider $G_A(K_{d,m,i})=K_{d,m,i-1}^{-1}$ and $G_B(K_{d,m,i})=K_{d,m+1,0}$, we have:

(3) $|G_A(\omega)|\leq|G_A(K_{d,m,i})|<0$ for all $x$ and $y$;

(4) $|G_B(\omega)|=f_{d,m}-f_{d,m}+x-y$, then
$G_A(\omega)\geq0\Leftrightarrow x-y\geq0$.

When $i=d-1$, consider $G_A(K_{d,m,i})=K_{d,m+1,0}$ and $G_B(K_{d,m,i})=K_{d,m,d-2}^{-1}$, we have:

(5) $|G_A(\omega)|=f_{d,m}-f_{d,m}+x-y$, then
$G_B(\omega)\geq0\Leftrightarrow x-y\geq0$.

(6) $|G_B(\omega)|\leq|G_B(K_{d,m,i})|<0$ for all $x$ and $y$.
\end{proof}

Obviously, the disjoint union of sets $T_{\alpha,\beta}$ consists of all factors with kernel $K_{d,m,i}$ for $m\geq1$.

\begin{definition} For $\omega\prec F_{d,\infty}$ and $p\geq1$, we can define six sets as below:

$\mathcal{P}_1:=\{(\omega,p):\omega_p~and~\omega_{p+1}~are~adjacent\}$,
$\mathcal{P}_2:=\{\omega:\exists ~p~s.t.~(\omega,p)\in\mathcal{P}_1\}$;

$\mathcal{S}_1:=\{(\omega,p):\omega_p~and~\omega_{p+1}~are~separated\}$;
$\mathcal{S}_2:=\{\omega:\exists ~p~s.t.~(\omega,p)\in\mathcal{S}_1\}$;

$\mathcal{O}_1:=\{(\omega,p):\omega_p~and~\omega_{p+1}~are~overlapped\}$;
$\mathcal{O}_2:=\{\omega:\exists ~p~s.t.~(\omega,p)\in\mathcal{O}_1\}$.
\end{definition}

\begin{definition}[$\Gamma_{i,\gamma}$] Let $\gamma=a$ or $b$, then
\begin{equation*}
\Gamma_{i,\gamma}=\begin{cases}
\{p\in\mathbb{N}:~\sigma_{d-i-1}(F_{d,\infty})[p]=\gamma\},&0\leq i\leq d-2;\\
\{p\in\mathbb{N}:~F_{d,\infty}[p]=\gamma\},&i=d-1.
\end{cases}
\end{equation*}

\end{definition}

\begin{property}[Combinatorial properties, strong] For $m\geq1$,

(1) $\mathcal{P}_1=(T_{0,2},\Gamma_{0,a})\cup (T_{0,4},\Gamma_{0,b})\cup[\cup_{i=1}^{d-2}(T_{i,2},\Gamma_{i,b})]\cup(T_{d-1,2},\Gamma_{d-1,a})$;

(2) $\mathcal{S}_1=(T_{0,1},\mathbb{N})\cup (T_{0,2}\cup T_{0,3},\Gamma_{0,b})\cup[\cup_{i=1}^{d-2}(T_{i,1},\Gamma_{i,b})]\cup(T_{d-1,1},\Gamma_{d-1,a})$;

(3) $\mathcal{O}_1=(T_{0,3}\cup T_{0,4},\Gamma_{0,a})\cup [\cup_{i=1}^{d-2}(T_{i,1}\cup T_{i,2},\Gamma_{i,a})]\cup (\cup_{i=1}^{d-2}T_{i,3}\cup T_{0,5}\cup T_{d-1,3},\mathbb{N})\cup(T_{d-1,1}\cup T_{d-1,2},\Gamma_{d-1,b})$.
\end{property}

\begin{property}[Combinatorial properties, weak] Let $ker(\omega)=K_{d,m,i}$, $m\geq1$, and let $\omega$ have expression $(\star)$ in Proposition 6.5, then

(1) $\mathcal{P}_2=\{i=0,~x-y=f_{d,m}-1\}\cup\{x-y=0\}$;

(2) $\mathcal{S}_2=\{0\leq x-y\leq f_{d,m}\}$;

(3) $\mathcal{O}_2=\{i=0,~2-f_{d,m}\leq x-y<f_{d,m-1}\}\cup\{1\leq i\leq d-1\}$.
\end{property}


\vspace{0.5cm}

\noindent\textbf{\large{Acknowledgments}}

\vspace{0.4cm}

The research is supported by the Grant NSFC No.11271223, No.11371210 and No.11326074.

\end{CJK*}

\begin{thebibliography}{AB}

\bibitem{AB2005} I.M.Ara$\acute{u}$jo, V.Bruy$\grave{e}$re, Words derivated from Sturmian words, Theor. Comput. Sci. 340 (2005) 204-219.

\bibitem{AS2003}  J.M.Allouche, J.Shallit, Automatic sequences: Theory, applications, generalizations. Cambridge University Press, Cambridge, 2003.

\bibitem{B1966} J.Berstel, Recent results in Sturmian words, in J.Dassow, A.Salomaa (Eds.), Developments in Language Theory, World Scientific, Singapore, 1966, pp.13-24.

\bibitem{B1980} J.Berstel, Mot de Fibonacci, S$\acute{e}$minaire d'informatique th$\acute{e}$rique, L.I.T.P., Paris, Ann$\acute{e}$e 1980/1981, pp.57-78.

\bibitem{BPS2008} L.Balkov$\acute{a}$, E.Pelantov$\acute{a}$, W.Steiner, Sequences with constant number of return words, Monatsh Math. 155 (2008) 251-263.

\bibitem{CW2003} W.-T.Cao, Z.-Y.Wen, Some properties of the factors of Sturmian sequences,
Theor. Comput. Sci. 304 (2003) 365-385.

\bibitem{CH2010} W.-F.Chuan, H.-L.Ho, Factors of characteristic words: Location and decompositions,
Theor. Comput. Sci. 411 (2010) 31-33.

\bibitem{D1998} F.Durand, A characterization of substitutive sequences using return words, Discrete Math. 179 (1998) 89-101.

\bibitem{HW2014} Y.-K.Huang, Z.-Y.Wen, Gap Sequence of Factors of Fibonacci Sequence, arXiv:1404.4269.

\bibitem{IY1990} S. Ito, S. Yasutomi, On continued fractions, substitutions and characteristic sequence, Japan. J. Math., 16 (1990), pp. 287-306.


\bibitem{L1983} M.Lothaire, Combinatorics on words, in: Encyclopedia of Mathematics and its applications, Vol.17, Addison-Wesley, Reading, MA, 1983.

\bibitem{L2002} M.Lothaire, Algebraic combinatorics on words, Cambridge Univ. Press, Cambridge, 2002.

\bibitem{M1991} F. Mignosi, On the number of factors of Sturmian words, Theoret . Comput. Sci., 88 (1991), pp. 71-84.

\bibitem{WW1993} Z.-X.Wen, Z.-Y.Wen, Some studies of factors of infinite words generated by invertible substitution, in: A. Barlotti, M.Delest, R. Pinzani (Eds.), Proc. Fifth Conf. Formal Power Seres and Algebraic Combinatorics, 1993, pp. 455-466.

\bibitem{WW1994} Z.-X.Wen, Z.-Y.Wen, Some properties of the singular words of the Fibonacci word, European J. Combin. 15 (1994) 587-598.

\bibitem{V2001} L.Vuillon, A characterization of Sturmian words by return words, European J. Combin. 22 (2001) 263-275.

\end{thebibliography}
\end{document}